\newcommand\BibTeX{{\rmfamily B\kern-.05em \textsc{i\kern-.025em b}\kern-.08em
T\kern-.1667em\lower.7ex\hbox{E}\kern-.125emX}}
\colorlet{texcscolor}{blue!50!black}
\colorlet{texemcolor}{red!70!black}
\colorlet{texpreamble}{red!70!black}
\colorlet{codebackground}{black!25!white!25}
\newcommand{\Real}{\mathbb R}
\newcommand{\sgn}{\mathrm{sign}}
\newtheorem{thrm}{Theorem}[section]
\newtheorem{prpstn}[thrm]{Proposition}
\newtheorem{dfntn}[thrm]{Definition}
\newtheorem{rmrk}[thrm]{Remark}
\begin{document}

\title{A solution of the minimum-time velocity planning problem\\
  based on lattice theory%
}

\providecommand{\keywords}[1]{\textbf{\textit{Index terms---}} #1}
\author{Luca Consolini$^1$, Mattia Laurini$^1$, Marco Locatelli$^1$, Andrea Minari$^1$}

\date{\small $^1$ Dipartimento di Ingegneria e Architettura, Universit\`a degli Studi di Parma,\\ Parco Area delle Scienze 181/A, 43124 Parma, Italy.\\ luca.consolini@unipr.it, mattia.laurini@unipr.it, marco.locatelli@unipr.it, andrea.minari2@studenti.unipr.it}

\maketitle

\begin{abstract}
For a vehicle on an assigned path, we find the minimum-time speed
law that satisfies kinematic and dynamic constraints, related to
maximum speed and maximum tangential and transversal acceleration.
We present a necessary and sufficient condition for the feasibility of the
problem and a simple operator, based on the solution of two ordinary
differential equations, which computes the optimal solution.
Theoretically, we show that the problem feasible set, if not empty, is a
lattice, whose supremum element corresponds to the optimal solution.
\end{abstract}


\keywords{Optimal Control, Speed Planning, Minimum-Time Problems, Lattice Theory, Autonomous Vehicles}

\maketitle

\section{Introduction}

An important problem in motion planning is the computation of the
minimum-time motion of a car-like vehicle from a start configuration
to a target one while avoiding collisions (obstacle avoidance) and
satisfying kinematic, dynamic and mechanical constraints (for
instance, on velocities, accelerations and maximal steering
angle). This problem can be approached in two ways:
\begin{itemize}
\item[1.] as a minimum-time trajectory planning where both the
path to be followed by the vehicle and the timing law on this path
(i.e., the vehicle's velocity) are simultaneously designed; or
\item[2.] as a (geometric) path planning followed by a minimum-time
velocity planning on the planned path
(see for instance~\cite{doi:10.1177/027836498600500304}).
\end{itemize}

In this paper, following the second paradigm, we assume that
a path that joins the initial and final configurations, compatible
with the maximum curvature allowed for the car-like vehicle,
is assigned and we aim at finding the time-optimal speed law
that satisfies some kinematic and dynamic constraints.

Previous works address the problem in time domain.
For instance,~\cite{Velenis2008} investigates necessary optimality
conditions that allows deriving a semi-analytical solution.
Reference~\cite{7810621} assumes that the vehicle moves along a
specified clothoid and proposes a semi-analytical solution for the
optimal profile of the longitudinal acceleration.
Some approaches propose special speed profiles that guarantee the
satisfaction of kinematic and dynamic constraints (see for instance
\cite{MunOllPraSim:94, SolNun2006, Villagra-et-al2012, CheHeBuHanZha2014}).
Other works (see, for instance, \cite{doi:10.1177/027836498500400301,
Verscheure09, 88024, LiSunKurZhu2014, Nagy2017})
represent the speed law as a function $v$ of the arc-length
position $s$ and not as a function of time.

Here, we follow this second approach, that considerably simplifies the optimization problem.
This paper is a development of our previous works
(\cite{Minari16, MinSCL17}), that consider the following
problem
\begin{subequations}
\label{eqn_problem_pr}
\begin{align}
\min_{v \in W^{1,\infty}\left(\left[0,s_f\right]\right)} & \int\limits_0^{s_f} v^{-1}(s) d s &\label{obj_fun_pr}\\
& v^-(s)\leq v(s) \leq  v^+(s),& s \in [0,s_f], \label{con_speed_pr}\\
& \alpha^-(s) \leq 2 v'(s)v(s) \leq \alpha^+(s), &s \in [0,s_f], \label{con_at_pr}\\
& |k(s)| v(s)^2 \leq \beta(s),&s \in [0,s_f],  \label{con_an_pr}
\end{align}
\end{subequations}

Here, $v^-$, $v^+$, $\alpha^-$, $\alpha^+$ are assigned functions,
with $v^-$, $v^+$ non-negative.
The objective function~\eqref{obj_fun_pr} is the total maneuver
time and constraints~\eqref{con_speed_pr},~\eqref{con_at_pr},~\eqref{con_an_pr}
limit velocity and the tangential and normal components of acceleration.

Problem~\eqref{eqn_problem_pr} belongs to a class of problems that
includes optimal velocity planning for manipulators and has the form

\begin{equation}
\label{eqn_num_int}
\begin{aligned}
\min_{v \in W^{1,\infty}\left(\left[0,s_f\right]\right)} & \int\limits_0^{s_f} v^{-1}(s) d s \\
&a_i(s) \dot v(s) v(s) + b_i(s) v(s)^2 + c_i (s) \leq 0, \, i=1, \ldots, m\,,
\end{aligned}
\end{equation}
where $m$ is the number of constraints and $a_i,b_i,c_i$ are assigned functions.
It is clear that Problem~\eqref{eqn_problem_pr} is a special case of
Problem~\eqref{eqn_num_int}, obtained for a specific choice of
$a_i$, $b_i$, $c_i$.

Our previous works~\cite{Minari16, MinSCL17} present an algorithm,
with linear-time computational complexity with respect to the number of
variables, that provides an optimal solution of~\eqref{eqn_problem_pr}
after spatial discretization.
Namely, the path is divided into $n$ intervals of equal length and
Problem~\eqref{eqn_problem_pr} is approximated
with a finite dimensional one in which the derivative of $v$ is
substituted with a finite difference approximation.

In this paper, we compute directly the exact continuous-time solution of
Problem~\eqref{eqn_problem_pr} without performing a finite-dimensional
reduction.
The main result of the paper is presented in
Theorem~\ref{main-theorem}. It gives a sufficient and necessary
condition for the feasibility of Problem~\eqref{eqn_problem_pr} and
presents its optimal solution, which is computed as the pointwise minimum
of the solutions of two ODEs.

The method we propose presents some resemblances with the method of
``numerical integration'', introduced for problems of class~\eqref{eqn_num_int}.
For instance,~\cite{88024} proposes a
method, based on the identification of ``characteristic switching
points'' in which the maximum velocity is attained. This simplifies the
calculation of the optimal velocity profile. A related algorithm is
presented in~\cite{kunz2012time}. Recent
paper~\cite{7827149} presents various properties of numerical
integration methods.
Anyway, the method we propose is simpler and more efficient since it
leverages the special structure of Problem~\eqref{eqn_problem_pr} with
respect to the more general problem~\eqref{eqn_num_int}.

{\bf Statement of contribution:}
With respect to existing literature, the new contributions of this work are the following ones:
\begin{itemize}
\item It presents a necessary and sufficient condition for the
  feasibility of Problem~\eqref{eqn_problem_pr} (see part \textit{i.} of
  Theorem~\ref{main-theorem}).
\item It proposes a simple operator, based on the solution of two
  ordinary differential equations, that computes the optimal solution
  (see part \textit{ii.} of Theorem~\ref{main-theorem}).
\end{itemize}

Note that these results correspond to the generalization to the continuous-time
case of the results presented in~\cite{MinSCL17} for the
spatially-discretized version of Problem~\eqref{eqn_problem_pr}. In
fact, this paper shares some of its fundamental ideas
with~\cite{MinSCL17}. Namely:
\begin{itemize}
\item The feasible set of
Problem~\eqref{eqn_problem_pr} has the algebraic structure of a
lattice, if equipped with the operations of pointwise minimum and
maximum.
\item The optimal solution of Problem~\eqref{eqn_problem_pr}
corresponds to the supremal element of this lattice.
\item The optimal solution of Problem~\eqref{eqn_problem_pr} is
obtained with a projection operation and its optimality is proven by the
Knaster-Tarski Fixpoint Theorem.
\end{itemize}

Anyway, solving the problem in a function space requires various nontrivial
technical extensions to the proofs presented in~\cite{MinSCL17}.

\emph{Notation}: Given an interval $I = \left[ a, b \right]$
and a measurable function $f : I \rightarrow \Real$, let us recall that
\[
{\left\| f \right\|}_\infty =
\inf\left\{ C \geq 0 : |f(x)| \leq C \textrm{ for almost every } x \in I \right\},
\]
\[
L^\infty(I) = \left\{ f : I \rightarrow \Real \ : \  {\left\| f \right\|}_\infty < \infty \right\},
\]
and
\[
W^{1, \infty}(I) = \left\{ f \in L^\infty(I) : Df \in L^\infty(I)\right\},
\]
where $Df$ denotes the weak derivative of $f$.
Let $f, g: \ I \rightarrow \Real $, define $f \wedge g $, $f \vee g $,
as, respectively, the pointwise minimum and maximum operations;
moreover let us define the partial order $\leq $ as follows
\[
f \leq g \iff \forall x \in I \ f(x) \leq g(x).
\]
We will write ``almost everywhere'' as ``a. e.'' and we will use symbol
$\lightning$ at the end of a proof to state that a contradiction has been reached.

\emph{Paper organization}:
Section~\ref{sec_formulation} presents the addressed optimal control
problem. Section~\ref{sec_main} presents the main result
(Theorem~\ref{main-theorem}) and Section~\ref{sec_ex} presents
some examples. Finally, Section~\ref{sec_proofs} presents the
proof of the main result.

\section{Problem formulation}
\label{sec_formulation}

Let $\gamma: \left[ 0, s_f \right] \rightarrow \Real^2$ be a $C^2$ function such that
$\left\| \gamma^\prime(\lambda) \right\| = 1, \forall \lambda \in \left[ 0, s_f \right]$.
The image set $\gamma\left(\left[0,s_f\right]\right)$ represents the path followed
by a vehicle, $\gamma(0)$ the initial configuration and $\gamma(s_f)$ the final one.
We want to compute the speed-law that minimizes the overall transfer time while
satisfying some kinematic and dynamic requirements. To this end, let
$\lambda: \left[ 0, t_f \right] \rightarrow \left[ 0, s_f \right]$ be a differentiable
monotone increasing function that represents the vehicle position as a function
of time and let $v: \left[ 0, s_f \right] \rightarrow \left[ 0, +\infty \right]$
be such that, $\forall t \in \left[0,t_f\right],\dot{\lambda}(t) = v(\lambda(t)).$
In this way, $v(s)$ is the vehicle velocity at position s. The position of the vehicle
as a function of time is given by $x: \left[ 0, t_f \right] \rightarrow \Real^2$, $x(t)
= \gamma(\lambda(t))$, the velocity and acceleration are given by
\begin{gather*}
\dot{x}(t) = \gamma^\prime(\lambda(t))v(\lambda(t)),\\
\ddot{x}(t) = a_L(t)\gamma^\prime(\lambda(t)) + a_N(t)\gamma^{\prime\perp}(\lambda(t)),
\end{gather*}
where $a_L(t) = v^\prime(\lambda(t))v(\lambda(t))$ and $a_N(t)(t)= k(\lambda(t))v(\lambda(t))^2$
are, respectively, the longitudinal and normal components of acceleration.
Here $k: \left[ 0, s_f \right] \rightarrow \Real$ is the scalar curvature, defined as
$k(s) = \left\langle \gamma^{\prime\prime}(s),\gamma^{\prime}(s)^\perp \right\rangle.$

We require to travel the distance $s_f$ in minimum-time while
satisfying constraints on the vehicle velocity and on its longitudinal and normal
acceleration.
\begin{figure}
\centering
\includegraphics[width = 2.5in]{./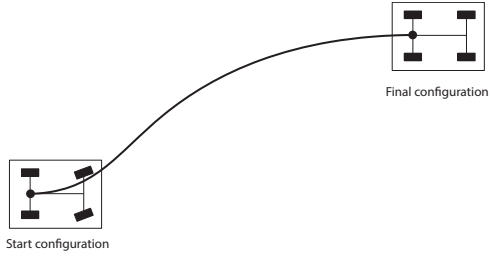}
\caption{A path to follow for an autonomous car-like vehicle.}
\label{fig:path-to-follow}
\end{figure}
The minimum-time problem can be
approached by searching a velocity profile
$v$ which is the solution of Problem~\eqref{eqn_problem_pr}.

It is convenient to make the change of variables $w=v^2$ (see also \cite{Verscheure09}), so that
Problem~\eqref{eqn_problem_pr} takes on the form
\begin{subequations}
\label{eqn_problem_pr_w}
\begin{align}
\min_{w \in W^{1, \infty}\left(\left[0,s_f\right]\right)} & \int\limits_0^{s_f} w(s)^{-\frac{1}{2}} d s &\label{obj_fun_pr_w}\\
& \mu^-(s) < w(s) \leq  \mu^+(s),& s \in [0,s_f], \label{con_speed_pr_w}\\
& \alpha^-(s) \leq w'(s) \leq \alpha^+(s), &s \in [0,s_f],  \label{con_at_pr_w}
\end{align}
\end{subequations}
where
\begin{equation}
\label{eqn_u}
\mu^+(s) = v^+(s)^2 \wedge \frac{\beta(s)}{k(s)},\qquad
\mu^-(s)= v^-(s)^2
\end{equation}
represent the upper bound of $w$,
(depending on the velocity bound $v^+$ and the curvature $k$) and
the lower bound of $w$, respectively.

In this paper, we actually address the following problem,
which is slightly more general than~\eqref{eqn_problem_pr_w},
\begin{subequations}
\label{eqn_problem_pr_wg}
\begin{align}
\min_{w \in W^{1,\infty}\left(\left[0,s_f\right]\right)} & \Psi(w) &\label{obj_fun_pr_wg}\\
& \mu^-(s) < w(s) \leq  \mu^+(s),& s \in [0,s_f], \label{con_speed_pr_wg}\\
& \alpha^-(s) \leq w'(s) \leq \alpha^+(s), &s \in [0,s_f],  \label{con_at_pr_wg}
\end{align}
\end{subequations}

where $\Psi: W^{1,\infty}\left(\left[0,s_f\right]\right) \to \Real$ is order reversing
(i.e., $x \geq y \Rightarrow \Psi(x) \leq \Psi(y)$) and
$\mu^-$, $\mu^+$, $\alpha^-$, $\alpha^+ \in L^\infty\left(\left[0,s_f\right]\right)$ are
assigned functions with $\mu^-, \alpha^+\geq 0$, $\alpha^-\leq 0$. Note that the objective
function~\eqref{obj_fun_pr_w} is order reversing, so that
Problem~\eqref{eqn_problem_pr_w} has the form~\eqref{eqn_problem_pr_wg}.
Consider the following:
\begin{dfntn}
\label{def:defQ}
Let $\mathcal{Q}$ be the subset of $W^{1,\infty}([0,s_f,])$ such
that $\mu \in \mathcal{Q}$ if $\sgn\left(\mu^\prime - \alpha^+\right)$
and $\sgn\left(\mu^\prime - \alpha^-\right)$ are Riemann integrable
(i.e., in view of the boundedness of the $\sgn$ function, a. e. continuous),
where $\sgn: \Real \rightarrow \{-1, 0, 1\}$ is defined as
\[
\sgn(x) =
\begin{cases}
1, & \mbox{if } \ x > 0 \\
0, & \mbox{if } \ x = 0 \\
-1, & \mbox{if } \ x < 0.
\end{cases}
\]
\end{dfntn}

\section{Main Results}
\label{sec_main}

Define the \emph{forward operator}
$F: \mathcal{Q} \rightarrow W^{1,\infty}\left(\left[ 0,s_f \right]\right)$
such that $F(\mu)=\phi$, where $\phi$ is the solution of the following
differential equation

\begin{equation}\label{eq:forward_iteration}
\begin{cases}
\phi'(x) =f(x,\phi)=
\begin{cases}
\alpha^+(x) \wedge \mu^\prime (x), & \mbox{if } \phi(x) \geq \mu(x) \\
\alpha^+(x), & \mbox{if }  \phi(x) < \mu(x)
\end{cases}\\
\phi(0) = \mu(0)\,.
\end{cases}
\end{equation}

Note that the solution of~\eqref{eq:forward_iteration} exists and is
unique by Theorem~1 in Chapter~2, Section~10
of~\cite{arscott2013differential}, since function $f$ is bounded on $[0,s_f]$, the subset of $[0,s_f] \times
\Real$ in which $f$ is discontinuous has zero measure and, $\forall x
\in [0,s_f]$, $\forall u,y \in \Real$,
\[
(u-y) (f(x,u)-f(x,y)) \leq 0.
\]

Conversely, define the \emph{backward operator} $B:
\mathcal{Q} \rightarrow W^{1,\infty}\left(\left[ 0,s_f \right]\right)$,
such that $B(\mu)=\phi$, where $\phi$ is the solution of

\begin{equation}\label{eq:backward_iteration}
\begin{cases}
\phi'(x) =
\begin{cases}
\alpha^-(x) \vee \mu^\prime (x), & \mbox{if } \phi(x) \geq \mu(x) \\
\alpha^-(x), & \mbox{if } \phi(x) < \mu(x)
\end{cases}\\
\phi(s_f) = \mu(s_f)\,,
\end{cases}
\end{equation}
whose existence and uniqueness hold for the same reasons as~\eqref{eq:forward_iteration}.

Finally, define the \emph{meet operator}  $M: \mathcal{Q}
 \rightarrow W^{1,\infty}\left(\left[ 0,s_f \right]\right)$ as
\begin{equation}
\label{eq:minimum}
M(\mu)=F(\mu) \wedge B(\mu).
\end{equation}

We claim that the \emph{meet operator} $M$ allows checking the
feasibility of Problem~\eqref{eqn_problem_pr_wg} and that, in case
Problem~\eqref{eqn_problem_pr_wg} is feasible, function
$v^* = M(\mu^+)$ represents its optimal solution.

Namely, the following is the main result of this paper.

\begin{thrm}\label{main-theorem}
Let $\mu^+ \in \mathcal{Q}$, then the following statements hold:
\begin{enumerate}
\item[\textit{i.}] Problem~\eqref{eqn_problem_pr_wg} is feasible if and only if function
$w^* = M(\mu^+)$
satisfies
\[
w^* \geq \mu^-\,.
\]
\item[\textit{ii.}] If Problem~\eqref{eqn_problem_pr_wg} is feasible,
then function $w^*= M(\mu^+)$ is its optimal solution.
\end{enumerate}
\end{thrm}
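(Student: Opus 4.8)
The plan is to prove both parts simultaneously by showing that $w^* = M(\mu^+)$ is the greatest element of the feasible set of Problem~\eqref{eqn_problem_pr_wg}. Concretely I would establish two facts: (a) every feasible $w$ satisfies $w \le w^*$, and (b) whenever $w^* \ge \mu^-$, the function $w^*$ is itself feasible. Granting these and recalling that $\Psi$ is order reversing, part \textit{i.} follows because feasibility forces some feasible $w$ with $\mu^- < w \le w^*$, hence $w^* \ge \mu^-$ (necessity), while conversely $w^* \ge \mu^-$ makes $w^*$ feasible, hence the problem feasible (sufficiency); and part \textit{ii.} follows because $w^*$, being feasible and dominating every feasible $w$, satisfies $\Psi(w^*) \le \Psi(w)$ for all feasible $w$ by order reversal. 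This realizes the ``supremal element of the lattice is optimal'' mechanism announced in the introduction; the fixed-point viewpoint enters by observing that a feasible $w$ satisfies $F(w) = w = B(w)$, so feasible functions are fixed points of the monotone operator $M$, with $w^* = M(\mu^+)$ the greatest one in the sense of Knaster--Tarski.

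For fact (a) I would argue separately that $w \le F(\mu^+)$ and $w \le B(\mu^+)$, whence $w \le F(\mu^+) \wedge B(\mu^+) = w^*$. First recall that $F(\mu^+) \le \mu^+$ and $B(\mu^+) \le \mu^+$, which is immediate from~\eqref{eq:forward_iteration}--\eqref{eq:backward_iteration}, since in both cases the defining dynamics prevent the solution from exceeding $\mu^+$. To see $w \le F(\mu^+)$, partition $[0,s_f]$ into the (relatively closed) set where $F(\mu^+) = \mu^+$, on which $w \le \mu^+ = F(\mu^+)$ trivially, and its complementary open intervals, on which $F(\mu^+) < \mu^+$ forces $F(\mu^+)' = \alpha^+$ by~\eqref{eq:forward_iteration}. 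On such an interval I would integrate $w' \le \alpha^+ = F(\mu^+)'$ forward from the left endpoint, where $w \le F(\mu^+)$ already holds (either that endpoint is $0$, where $w(0) \le \mu^+(0) = F(\mu^+)(0)$, or it is a point where $F(\mu^+) = \mu^+ \ge w$), concluding $w \le F(\mu^+)$ throughout. The bound $w \le B(\mu^+)$ is symmetric, integrating $w' \ge \alpha^- = B(\mu^+)'$ backward from the right endpoint.

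For fact (b), the upper bound $w^* \le \mu^+$ is inherited from $F(\mu^+), B(\mu^+) \le \mu^+$, and $w^* \ge \mu^-$ is the standing hypothesis; the substantive point is the derivative constraint $\alpha^- \le (w^*)' \le \alpha^+$ a.e. Here I would partition $[0,s_f]$ into $\{F(\mu^+) < B(\mu^+)\}$, $\{B(\mu^+) < F(\mu^+)\}$ and $\{F(\mu^+) = B(\mu^+)\}$. On the first set $F(\mu^+) < B(\mu^+) \le \mu^+$, so $F(\mu^+) < \mu^+$, giving $(w^*)' = F(\mu^+)' = \alpha^+ \in [\alpha^-,\alpha^+]$; symmetrically, on the second set $(w^*)' = B(\mu^+)' = \alpha^- \in [\alpha^-,\alpha^+]$. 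On the coincidence set I would invoke the standard Sobolev fact that $F(\mu^+)' = B(\mu^+)'$ a.e.\ where the two functions agree and that $(w^*)'$ equals this common value; a short case check, according to whether $w^* = \mu^+$ there or not, shows the common value again lies in $[\alpha^-,\alpha^+]$. Thus $w^* \in W^{1,\infty}$ meets all constraints of~\eqref{eqn_problem_pr_wg}.

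The main obstacle, as the paper itself signals, is not the combinatorial skeleton above but its rigorous execution in the function space: the operators $F$ and $B$ are defined through ODEs with discontinuous right-hand sides whose solutions are only $W^{1,\infty}$, so every ``integrate the inequality'' and ``derivatives coincide on a level set'' step must be justified almost everywhere, using the membership $\mu^+ \in \mathcal{Q}$ of Definition~\ref{def:defQ} to control the switching sets. I would treat with particular care the pointwise-minimum step in fact (b), where showing $\bigl(F(\mu^+) \wedge B(\mu^+)\bigr)'$ lands in $[\alpha^-,\alpha^+]$ on the coincidence set is the one place the two one-sided constructions genuinely interact. A secondary technical wrinkle is the mismatch between the strict lower constraint $\mu^- < w$ and the non-strict criterion $w^* \ge \mu^-$; I would handle the boundary case $w^* = \mu^-$ separately, noting that fact (a) forces every feasible $w$ to lie below $w^*$, so that $w^*$ is in all cases the supremal feasible element and the stated criterion governs solvability up to this boundary.
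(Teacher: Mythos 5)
Your proof is correct, but it takes a genuinely different route from the paper's. The paper never argues directly on the ODE solutions: it introduces auxiliary operators $\bar F$, $\bar B$, $\bar M$ on the complete lattice $P$ through the explicit infimum formula $\bar M(\mu)(x) = \bigwedge_{y \in I}\{\mu(y) + A(y,x)\}$ built on the hemi-metric $A$ of \eqref{eq:A}, proves these are order preserving and idempotent, characterizes the feasible points as the fixed points of $\bar M$ lying between $\mu^-$ and $\mu^+$ (Proposition~\ref{prop_feas}), identifies $\bar M(\mu^+)$ as the greatest such fixed point via the Knaster--Tarski theorem (Proposition~\ref{prop_max_property_M}), and only then connects the lattice operators to the ODE-defined $F$, $B$ by computing one-sided derivatives of the infimum formula (Proposition~\ref{prop_diff_eq}, technically the heaviest step). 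You work instead directly with $F(\mu^+)$ and $B(\mu^+)$: your fact (a) replaces the monotonicity-plus-fixed-point reasoning by an elementary comparison/integration argument on the open set $\{F(\mu^+) < \mu^+\}$, where \eqref{eq:forward_iteration} forces $F(\mu^+)' = \alpha^+$ a.e., and your fact (b) replaces the idempotence of $\bar M$ by the three-set decomposition $\{F < B\}$, $\{B < F\}$, $\{F = B\}$ together with the a.e.\ equality of derivatives of Sobolev functions on their coincidence set; on that set your case distinction can even be skipped, since $F(\mu^+)' \le \alpha^+$ and $B(\mu^+)' \ge \alpha^-$ hold a.e.\ in both branches of \eqref{eq:forward_iteration} and \eqref{eq:backward_iteration}, so their common value automatically lies in $[\alpha^-,\alpha^+]$. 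What you lose is precisely what the paper advertises as its conceptual contribution: the lattice structure of the feasible set, the closed-form representation of $M$ on all of $P$, and the Knaster--Tarski interpretation generalizing the discretized theory of the earlier work. What you gain is a shorter, self-contained proof of the theorem that bypasses Propositions~\ref{prop_meetpreserving}--\ref{prop_diff_eq} entirely and needs only the well-posedness of \eqref{eq:forward_iteration}--\eqref{eq:backward_iteration} already asserted in Section~\ref{sec_main}. Finally, the strict-versus-nonstrict wrinkle you flag is real, but it is an inconsistency of the paper itself rather than a defect of your argument: Proposition~\ref{prop_feas} silently reads constraint \eqref{con_speed_pr_wg} with $\mu^- \le w$, and under the strict reading the ``if'' direction of part \textit{i.}\ can fail exactly when $w^*$ touches $\mu^-$ (your fact (a) shows every feasible $w$ lies below $w^*$, so no feasible point can then satisfy the strict bound); your handling of this boundary case is no less careful than the paper's.
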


\emph{Proofs of the results.}
Part \textit{i.} follows from Proposition~\ref{prop_feasibility},
part \textit{ii.} follows from Proposition~\ref{prop_solution} (see Section~\ref{sec_proofs}).

\section{Examples}
\label{sec_ex}
As a first example consider the path shown in
Figure~\ref{fig:geometry_path}, whose
curvature is defined as
\begin{equation}
\label{eq:curvature_function}
k(s) = \begin{cases}
0, & \mbox{if } s \in \left[0,l_1\right)\\
k_\tau(s), & \mbox{if } s \in \left[l_1, l_2\right]\\
1/R ,& \mbox{if } s \in  \left(l_2,l_3\right)\\
k_\tau(s), & \mbox{if } s \in \left[l_3, l_4\right]\\
0, & \mbox{if } s \in (l_4,s_f]
\end{cases}
\end{equation}
where $k_\tau(s)$ is the 6-th degree Hermite polynomial used to guarantee
the following interpolation conditions:
\begin{align*}
&k(l_1) = k(l_4) = 0,\\
&k(l_2) = k(l_3) = 1/R,\\
&k^\prime(l_i)=k^{\prime\prime}(l_i)  = 0, \qquad i = 1, \ldots, 4. \\
\end{align*}
\begin{figure}
\centering
\includegraphics[width=2.5in]{./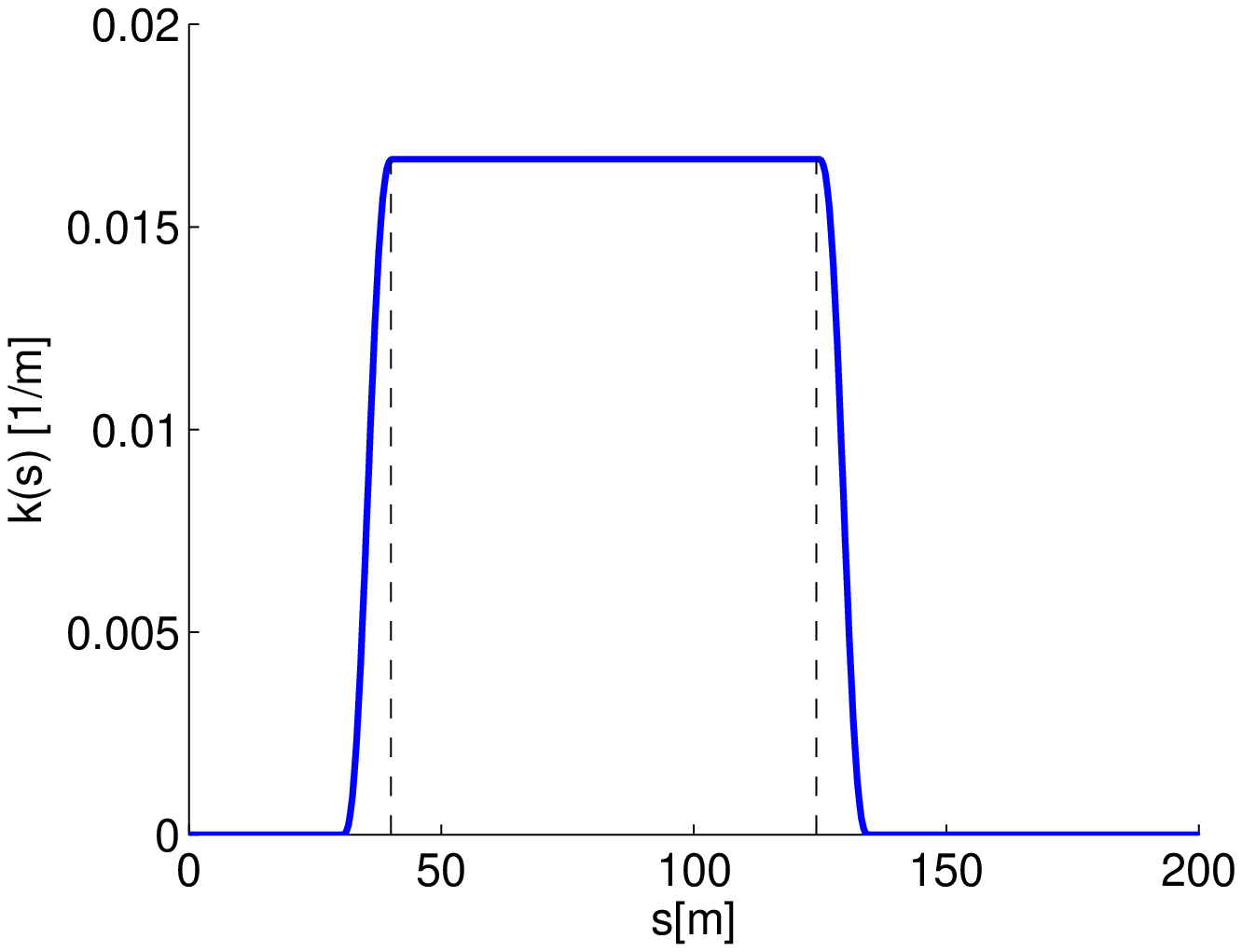}
\includegraphics[width=2.5in]{./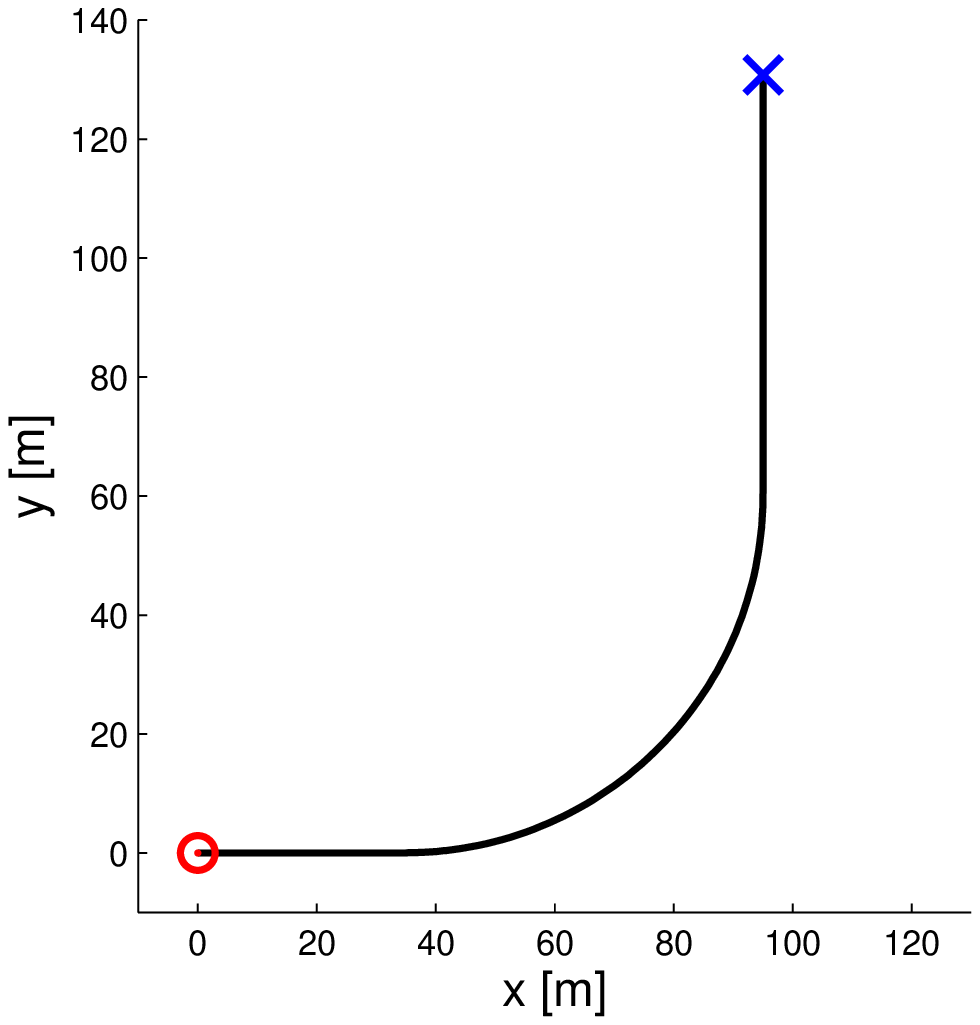}
\caption{On the left, the curvature function $k$
  in~\eqref{eq:curvature_function} of the curve discussed in the first example. On
  the right, the black line represents the path, the red circle and the black cross represent the
  starting point and, respectively, the end point.}
\label{fig:geometry_path}
\end{figure}
In this example, the total length is
$s_f = 200$ and the minimum-time velocity planning problem is addressed with
$\left[l_1,l_2,l_3,l_4\right]=\left[ 30, \ 40,\ 124.2478 ,\ 134.2478\right]$, $R = 60$.
The velocity bounds $v^{+}$ and $v^{-}$ are set as follows: $v^{+}(0) = v^{-}(0) = 0$,
$v^{+}(s_f) = v^{-}(s_f) = 22$, while, for each $s \in \left(0 , s_f \right)$, $v^{-}(s) = 0 $
and $v^{+}(s)=36.1$.
The longitudinal acceleration limits are $\alpha^-= -10.5$ and
$\alpha^+ = 4$, and the maximal normal acceleration is $\beta = 7$.

The following results are obtained by numerically solving
equations~\eqref{eq:forward_iteration},~\eqref{eq:backward_iteration}
with a standart Runge-Kutta 45 integration
scheme. Figure~\ref{fig:ex_operator_f_b} shows the upper-bound
function
$\mu^+$ obtained
by~\eqref{eqn_u} and the corresponding functions $F(u)$ and $B(u)$
computed as the solution of equations~\eqref{eq:forward_iteration} and~\eqref{eq:backward_iteration}, respectively.
Figure~\ref{fig:ex_OPT} shows the optimal solution $w^*$ obtained with \eqref{eq:minimum}.
In this example, the vehicle starts with zero velocity and accelerates
to the upper bound. Then, it follows the velocity bound
in order to respect the
maximum velocity constraint due to the lateral acceleration on the curve.
After that, at the end of the constant bound, the vehicle accelerates and reaches a second local
maximum velocity after which it decelerates quickly in order to reach the final
velocity $v^+(s_f)$.

\begin{figure}
	\centering
	\includegraphics[width=2in]{./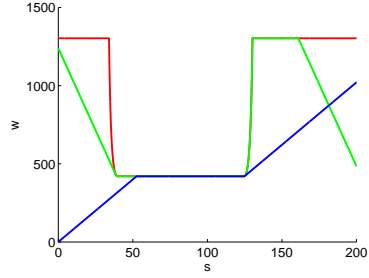}
	\caption{Example 1: The red line represents function $\mu^+$
          defined in~\eqref{eqn_u},  the blue line represents
	$F(u)$ while the green line represents $B(u)$.}
	\label{fig:ex_operator_f_b}
\end{figure}

\begin{figure}[!h]
	\centering
	\includegraphics[width=2in]{./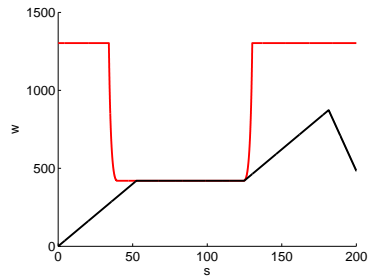}
	\caption{Example 1: The red line
          represents $\mu^+$ defined in~\eqref{eqn_u}, the black line represents
	the optimal solution $w^* = M(u)$.}
	\label{fig:ex_OPT}
\end{figure}

As a second example, consider the same path and constraints as in the
first example, with different initial
and final conditions: $v^{-}(0) = v^{+}(0) = 0$ ,
$v^{-}(s_f) = v^{+}(s_f)=35$.
Figure~\ref{fig:ex_no-feasible} shows function $w^*$ obtained by~\eqref{eq:minimum}.
In this case, Problem~\eqref{eqn_problem_pr_wg} is unfeasible by
Theorem~\ref{main-theorem}, being $w^*(s_f) < v^{-}(s_f)^2$.
In fact, the allowed maximum longitudinal acceleration is not sufficient to
reach the final condition on velocity.\\
\begin{figure}[]
\centering
\includegraphics[ width=2in]{./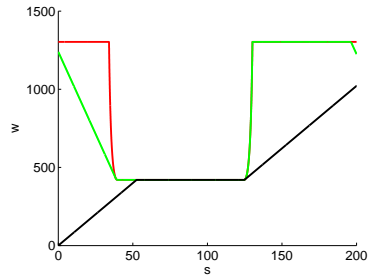}
\caption{Example 2. The green line represents
the velocity function $B(u)(s)$ while the black one depicts function $w^*=M(u)$.
	The final velocity condition is not satisfied: $w^*(s_f) \ne v_f^2$.}
\label{fig:ex_no-feasible}
\end{figure}
As a third example, consider a curve obtained by a quintic
polynomial curve which interpolates coordinates $x = [0,\ 2,\ 2.60,\ 1.75,\ 3]$,
$y = [0 ,\ -0.5,\ 0,\  2, \ 3]$ (see Figure~\ref{fig:path-eta2}).
The velocity planning is addressed with
$v^{+}(0) = v^{-}(0) = 0$, $v^{+}(s_f) = v^{-}(s_f) = 0$ and with
$v^{-}(s) = 0 $ and $v^{+}(s) = 1.3 $ for each $s\in \left(0 , s_f \right)$.
The longitudinal acceleration limits are
$\alpha^- = -0.1$, $\alpha^+ = 0.1$,
and the maximal normal acceleration $\beta= 0.05$.
The resulting optimal velocity profile is plotted in
Figure~\ref{fig:optimal-velocity-example-2}.

\begin{figure}[!h]
\centering
\includegraphics[width = 2.2in]{./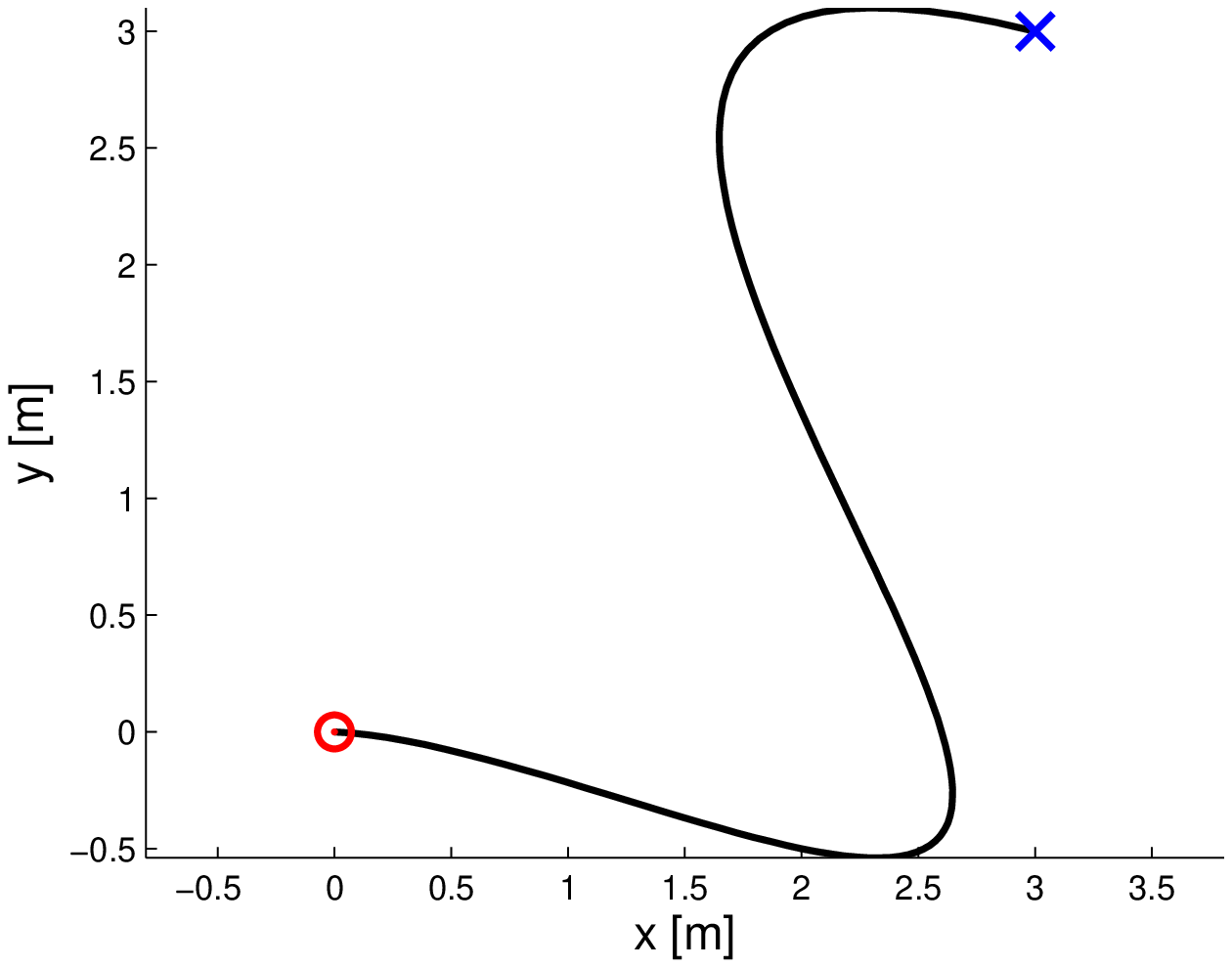}
\caption{A path obtained by quintic-splines interpolation. The black line represents
the path while the circle and the cross represent the start and the end point, respectively.}
\label{fig:path-eta2}
\end{figure}

\begin{figure}[!h]
\centering
\includegraphics[width = 2.2in]{./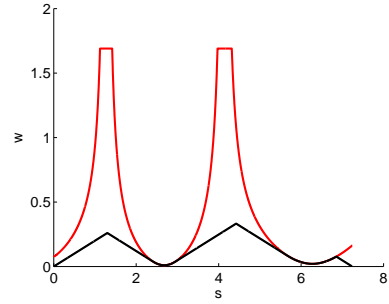}
\caption{The red line represents function $\mu^+$ defined
  in~\eqref{eqn_u}, the black line is the optimal velocity profile $w^*=M(u)$.}
\label{fig:optimal-velocity-example-2}
\end{figure}

\section{Proofs}
\label{sec_proofs}
Given the interval $I = \left[ 0, s_f \right]$,
let $P = \left\{ u \in L^\infty(I) :\ 0 \leq u(s) \leq \left\| \mu^+ \right\|_\infty \text{ for almost every } s \in I \right\}$.
Note that $\langle P;\vee,\wedge \rangle$ is a complete lattice.
Hence, for each subset $S\subseteq P$, there exists a unique least upper bound
$u \in P $, such that, $\forall v \in P$
\[
(\forall w \in S \ w \le v) \iff u \le v.
\]
The least upper bound of $S$ is denoted by $\bigvee S$.
Dually, it is possible to define the greatest lower bound of $S\subseteq P$,
denoted by~$\bigwedge S$ (see Definitions 2.1, 2.4 and Notation 2.3 on
pages 33-34~of~\cite{davey2002introduction}).

Given function $\chi: \Real \rightarrow \{0, 1\}$ defined as follows
\[
\chi(s) =
\begin{cases}
1, & \mbox{ if }\ s \geq 0 \\
0, & \mbox{ otherwise},
\end{cases}
\]
let us define $\forall x, y \in I$, function $A: I \times I \rightarrow \Real$ as
\begin{equation}\label{eq:A}
A(x, y) = \int\limits_{x}^{y} \left\{ \alpha^+(\xi)\chi(y - x) + \alpha^-(\xi)\chi(x - y) \right\} d\xi.
\end{equation}
Define, also, operators $\bar F, \bar B, \bar M:P \to P$, such that,
for $\mu \in P$, $\bar F(\mu)$ and $\bar B(\mu)$ are given as follows
\begin{equation*}\label{def:F}
\begin{cases}
\bar F(\mu)(x) = \underset{y \leq x}{\bigwedge} \{ \mu(y) + A(y, x) \} \\
\bar F(\mu)(0) = \mu(0), \\
\end{cases}
\end{equation*}
\begin{equation*}\label{def:B}
\begin{cases}
\bar B(\mu)(x) = \underset{y \geq x}{\bigwedge} \{ \mu(y) + A(y, x) \} \\
\bar B(\mu)(s_f) = \mu(s_f).
\end{cases}
\end{equation*}
and $\bar M(\mu) = \bar F(\mu) \wedge \bar B(\mu)$.
Observe that $\forall x\in I$,
\begin{equation}
\label{eq:defM}
\bar M(\mu)(x) = \underset{y \leq x}{\bigwedge} \{ \mu(y) + A(y, x) \} \wedge
\underset{y \geq x}{\bigwedge} \{ \mu(y) + A(y, x) \} =
\underset{y \in I}{\bigwedge} \{ \mu(y) + A(y, x) \}.
\end{equation}
As we will show in Proposition~\ref{prop_diff_eq}, the operators we just
introduced are extensions of those defined, respectively,
in~\eqref{eq:forward_iteration},~\eqref{eq:backward_iteration}
and~\eqref{eq:minimum}.

We will refer to the following definitions.

\begin{dfntn}
An operator $\phi: P \to P$ is meet preserving if, $\forall u, v \in P$,
\[
\phi(u \wedge v)=\phi(u) \wedge \phi(v).
\]
\end{dfntn}

\begin{dfntn}
An operator $\phi: P \to P$ is order preserving if, $\forall u, v \in P$,
\[
u \leq v \Rightarrow \phi(u) \leq \phi(v).
\]
\end{dfntn}

\noindent
Even though we will only use the fact that operators $\bar F, \bar B$ and $\bar M$
are order preserving, for completeness we state the following Proposition.

\begin{prpstn}\label{prop_meetpreserving}
Operators $\bar F$, $\bar B$ and $\bar M$ are meet preserving and order preserving.
\end{prpstn}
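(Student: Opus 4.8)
The plan is to reduce all six assertions to the single representation
$\bar M(\mu)(x) = \bigwedge_{y \in I} \{ \mu(y) + A(y, x) \}$ recorded
in~\eqref{eq:defM}, together with its restricted analogues for $\bar F$ (infimum over
$y \leq x$) and $\bar B$ (infimum over $y \geq x$), and then to invoke two elementary
lattice identities. First I would observe that, for each fixed $x \in I$, the quantity
$\mu(y) + A(y, x)$ is, as $y$ ranges over the relevant index set, merely a family of real
numbers, so the symbol $\bigwedge$ here acts as an ordinary real infimum taken pointwise
in $x$. This localizes the whole statement to verifying the two properties at each $x$
separately.

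For order preservation I would argue directly: if $u \leq v$, then for every admissible
$y$ we have $u(y) + A(y, x) \leq v(y) + A(y, x)$, and passing to the infimum over $y$
preserves the inequality, whence $\bar F(u)(x) \leq \bar F(v)(x)$ for all $x$; the same
reasoning applies verbatim to $\bar B$. For meet preservation I would use, first, that
addition distributes over the binary meet, so that
$(u(y) \wedge v(y)) + A(y, x) = (u(y) + A(y, x)) \wedge (v(y) + A(y, x))$, and second, the
identity that the infimum of a pointwise meet of two families equals the meet of the two
separate infima, $\bigwedge_y (a_y \wedge b_y) = (\bigwedge_y a_y) \wedge (\bigwedge_y b_y)$.
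Combining these gives $\bar F(u \wedge v)(x) = \bar F(u)(x) \wedge \bar F(v)(x)$ for every
$x$, and identically for $\bar B$.

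The corresponding claims for $\bar M$ then follow without any further analysis from
$\bar M = \bar F \wedge \bar B$ and the commutativity and associativity of the pointwise
meet: one has $\bar M(u \wedge v) = (\bar F(u) \wedge \bar F(v)) \wedge (\bar B(u) \wedge \bar B(v)) = (\bar F(u) \wedge \bar B(u)) \wedge (\bar F(v) \wedge \bar B(v)) = \bar M(u) \wedge \bar M(v)$,
and $u \leq v$ likewise yields $\bar M(u) \leq \bar M(v)$.

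Each of these steps is routine; the part deserving the most care is the well-definedness of
the operators as maps $P \to P$ — that the pointwise-in-$x$ infima yield functions that are
measurable and stay within the bounds $[0, \left\| \mu^+ \right\|_\infty]$ defining $P$ — and
the verification that the index-set restrictions ($y \leq x$ for $\bar F$, $y \geq x$ for
$\bar B$) do not obstruct the two lattice identities, which they do not, since the restriction
is identical for the two families being compared. I expect the only genuine subtlety to be the
justification that $\bigwedge$ may be evaluated pointwise and that the exchange of infimum and
meet is legitimate in the complete lattice $\langle P; \vee, \wedge \rangle$.
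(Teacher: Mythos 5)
Your proof is correct and is essentially the paper's own argument: meet preservation for $\bar F$ and $\bar B$ is obtained exactly as in the paper, by distributing addition over the binary meet and exchanging the infimum over $y$ with the meet, and the claim for $\bar M = \bar F \wedge \bar B$ then follows from commutativity and associativity of the pointwise meet. The only (harmless) deviation is that you prove order preservation directly from the infimum representation, whereas the paper deduces it from meet preservation by citing a general lattice-theoretic fact (Proposition~2.19 of Davey and Priestley); both routes are sound, and the well-definedness and measurability issues you flag are likewise left implicit in the paper, so nothing is missing relative to its proof.
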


\begin{proof}
Let $u, v \in P$, $w= u \wedge v$.
We want to show that $\bar F(w) = \bar F(u) \wedge \bar F(v)$.
By definition of $\bar F$ we have that $\bar F(w)(0) = \bar F(u)(0) \wedge \bar F(v)(0)$ and
$\forall x \in I$,
\begin{align*}
\bar F(w)(x) & = \bigwedge_{y \leq x} \left\{ w(y) + A(y, x) \right\} =
\bigwedge_{y \leq x} \left\{ (u \wedge v)(y) + A(y, x) \right\} =
\bigwedge_{y \leq x} \left\{ u(y) \wedge v(y) + A(y, x) \right\} = \\
& = \bigwedge_{y \leq x} \left\{ u(y) + A(y, x) \right\} \wedge
\bigwedge_{y \leq x} \left\{ v(y) + A(y, x) \right\} = \bar F(u)(x) \wedge \bar F(v)(x).
\end{align*}
The proof that $\bar B(u \wedge v) = \bar B(u) \wedge \bar B(v)$ is analogous.
Finally, $\bar M(u \wedge v) = \bar F(u \wedge v) \wedge \bar B(u \wedge v)
= \bar F(u) \wedge \bar F(v) \wedge \bar B(u) \wedge \bar B(v) =
\bar F(u) \wedge \bar B(u) \wedge \bar F(v) \wedge \bar B(v) = \bar M(u) \wedge \bar M(v)$.
Since maps $\bar F, \bar B, \bar M$ are meet preserving, they are also order preserving
(see Proposition~2.19 on page 44 of~\cite{davey2002introduction}).
\end{proof}

\begin{prpstn}\label{prop:A}
Function $A: I \times I \rightarrow \Real$ defined as in~\eqref{eq:A} is a
hemi-metric, that is, it satisfies the following properties:
\begin{itemize}
\item[\textit{i.}] $\forall x, y \in I,\ A(x, y) \geq 0$,
\item[\textit{ii.}] $\forall x \in I,\ A(x, x) = 0$,
\item[\textit{iii.}] $\forall x, y, z \in I,\ A(x, z) \leq A(x, y) + A(y, z)$ (i.e., the triangular inequality holds).\\
Moreover, equality holds if $x \geq y \geq z$ or $x \leq y \leq z$.
\end{itemize}
\end{prpstn}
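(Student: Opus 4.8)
The plan is to first eliminate the characteristic function $\chi$ from the definition~\eqref{eq:A} by splitting on the sign of $y-x$, obtaining a clean closed form for $A$, and then to verify the three properties in increasing order of difficulty, the genuine content residing entirely in the triangle inequality.

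First I would record that, since $\chi(s)=1$ exactly when $s\geq 0$, the definition collapses to
\[
A(x,y) =
\begin{cases}
\int_x^y \alpha^+(\xi)\, d\xi, & x \leq y, \\
\int_x^y \alpha^-(\xi)\, d\xi, & x > y,
\end{cases}
\]
with the usual convention $\int_x^y = -\int_y^x$ when $x>y$ (and value $0$ when $x=y$). Property \textit{ii.} is then immediate because $\int_x^x = 0$. Property \textit{i.} follows from the standing sign hypotheses $\alpha^+ \geq 0 \geq \alpha^-$: when $x\leq y$ the integrand $\alpha^+$ is nonnegative over an interval traversed left to right, and when $x>y$ we have $A(x,y) = -\int_y^x \alpha^-(\xi)\, d\xi$, the integral of a nonpositive function taken with a minus sign, hence nonnegative.

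The core of the proof is property \textit{iii.}, which I would establish by a case analysis on the relative order of $x$, $y$, $z$. The two monotone cases $x\leq y\leq z$ and $x\geq y\geq z$ give \emph{equality} directly from the additivity $\int_x^z = \int_x^y + \int_y^z$, applied to $\alpha^+$ and to $\alpha^-$ respectively; this simultaneously settles the final equality clause of the statement. For each of the four remaining ``detour'' orderings, where $y$ lies outside the interval with endpoints $x$ and $z$, I expect the same recurring manoeuvre to work: write each of $A(x,y)$, $A(y,z)$, $A(x,z)$ in the closed form above, split the longer integral at the intermediate point by additivity, and collect terms. In every such case the difference $A(x,y)+A(y,z)-A(x,z)$ reduces to $\int_J \left(\alpha^+(\xi)-\alpha^-(\xi)\right) d\xi$ over a single subinterval $J$, namely the portion of the detour that is retraced, and this quantity is nonnegative precisely because $\alpha^+ - \alpha^- \geq 0$.

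The proof involves no analytic subtlety; the one point requiring care is the bookkeeping of the six orderings and the correct choice of the ``upward'' versus ``downward'' branch of $A$ in each. The unifying observation that makes this painless—and that I would state once up front to avoid repeating essentially the same computation four times—is that the cost of any displacement is measured by $\alpha^+$ on upward segments and by $-\alpha^-$ on downward segments, both nonnegative, so a detour through $y$ can only add the nonnegative cost of the retraced segment.
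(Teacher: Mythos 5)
Your proposal is correct and follows essentially the same route as the paper: a case analysis over the orderings of $x,y,z$, with the sign hypotheses $\alpha^+\geq 0\geq\alpha^-$ giving properties \textit{i.} and \textit{iii.}, and additivity of the integral giving \textit{ii.} and the equality clause in the monotone cases. The only (cosmetic) difference is that you compute the slack $A(x,y)+A(y,z)-A(x,z)$ exactly as $\int_J\left(\alpha^+(\xi)-\alpha^-(\xi)\right)d\xi$ over the retraced segment $J$, whereas the paper reaches the same quantity by chaining two inequalities (enlarging the domain of the $\alpha^+$ integral, then adding $-\int\alpha^-\geq 0$) in one representative case and invoking symmetry for the rest.
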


\begin{proof}
\begin{itemize}
\item[\textit{i.}]
It holds, since $\alpha^+$ is non-negative
and $\alpha^-$ is non-positive over $I$.
\item[\textit{ii.}]
It holds trivially by definition of $A$.
\item[\textit{iii.}]
For $y \geq z \geq x$:
\begin{align*}
A(x, z) = & \int\limits_{x}^{z} \left\{ \alpha^+(\xi)\chi(z - x) + \alpha^-(\xi)\chi(x - z) \right\} d\xi =
\int\limits_{x}^{z} \alpha^+(\xi) d\xi \leq \\
\leq & \int\limits_{x}^{y} \alpha^+(\xi) d\xi \leq
\int\limits_{x}^{y} \alpha^+(\xi) d\xi - \int\limits_{z}^{y} \alpha^-(\eta) d\eta =
\int\limits_{x}^{y} \alpha^+(\xi) d\xi + \int\limits_{y}^{z} \alpha^-(\eta) d\eta = \\
= & \int\limits_{x}^{y} \left\{ \alpha^+(\xi)\chi(y - x) + \alpha^-(\xi)\chi(x - y) \right\} d\xi +
\int\limits_{y}^{z} \left\{ \alpha^+(\eta)\chi(z - y) + \alpha^-(\eta)\chi(y - z)\right\} d\eta = \\
= & A(x,y)+A(y,z).
\end{align*}
The same reasoning applies also to the case when $z \geq x \geq y$, $x \geq z \geq y$
or $y \geq x \geq z$.
Next, let us show that equality holds for any $x \leq y \leq z$:
\begin{align*}
A(x, z) = & \int\limits_{x}^{z} \left\{ \alpha^+(\xi)\chi(z - x) + \alpha^-(\xi)\chi(x - z) \right\} d\xi =
\int\limits_{x}^{z} \alpha^+(\xi) d\xi =
\int\limits_{x}^{y} \alpha^+(\xi) d\xi + \int\limits_{y}^{z} \alpha^+(\eta) d\eta = \\
= & \int\limits_{x}^{y} \left\{ \alpha^+(\xi)\chi(y - x) + \alpha^-(\xi)\chi(x - y) \right\} d\xi +
\int\limits_{y}^{z} \left\{ \alpha^+(\eta)\chi(z - y) + \alpha^-(\eta)\chi(y - z) \right\} d\eta = \\
= & A(x, y) + A(y, z).
\end{align*}
The proof that the equality holds also for any $x \geq y \geq z$ is analogous.
\end{itemize}
\end{proof}

\begin{prpstn}
\label{prop_prop_of_M}
Function $\bar M$ satisfies the following properties, $\forall \mu \in P$,
\begin{itemize}
\item[\textit{i.}] $\bar M(\mu) \leq \mu$,
\item[\textit{ii.}] $\bar M^2(\mu) = \bar M(\mu)$, where $\bar M^2(\mu)$
stands for $\bar M(\bar M(\mu))$.
\end{itemize}
\end{prpstn}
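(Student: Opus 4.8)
The plan is to reduce everything to the closed-form expression \eqref{eq:defM}, namely $\bar M(\mu)(x) = \bigwedge_{y \in I}\{\mu(y) + A(y,x)\}$, together with the hemi-metric properties of $A$ established in Proposition~\ref{prop:A}. Part \textit{i.} is then immediate: for each $x \in I$ the index $y = x$ belongs to the set over which the greatest lower bound is taken, and by property \textit{ii.} of Proposition~\ref{prop:A} we have $A(x,x) = 0$; hence the infimum is at most the single term $\mu(x) + A(x,x) = \mu(x)$, giving $\bar M(\mu)(x) \le \mu(x)$.

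For part \textit{ii.} I would establish the two inequalities separately. The inequality $\bar M^2(\mu) \le \bar M(\mu)$ comes for free: since $\bar M$ maps $P$ into $P$, the function $\bar M(\mu)$ is itself an admissible argument, and applying part \textit{i.} to it yields $\bar M(\bar M(\mu)) \le \bar M(\mu)$. For the reverse inequality $\bar M^2(\mu) \ge \bar M(\mu)$ I would expand the composition using \eqref{eq:defM} twice,
\[
\bar M^2(\mu)(x) = \bigwedge_{z \in I}\{\bar M(\mu)(z) + A(z,x)\} = \bigwedge_{z \in I}\bigwedge_{y \in I}\{\mu(y) + A(y,z) + A(z,x)\}.
\]
The triangular inequality (property \textit{iii.} of Proposition~\ref{prop:A}) gives $A(y,z) + A(z,x) \ge A(y,x)$ for all $y,z \in I$, so every term of the double infimum dominates $\mu(y) + A(y,x)$, which in turn is bounded below by $\bigwedge_{y \in I}\{\mu(y) + A(y,x)\} = \bar M(\mu)(x)$. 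Hence each term, and therefore the whole nested infimum, is at least $\bar M(\mu)(x)$. Combining the two inequalities gives $\bar M^2(\mu) = \bar M(\mu)$.

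I expect the argument to be short precisely because Proposition~\ref{prop:A} has already done the essential work: part \textit{i.} uses only $A(x,x) = 0$, while idempotence is exactly the statement that composing the ``shortest-arc'' operator $\bar M$ with itself changes nothing, which is what the triangle inequality encodes. The step requiring the most care is not the algebra but the functional-analytic bookkeeping: one must verify that $\bar M(\mu)$ genuinely lies in $P$ (so that part \textit{i.} may legitimately be reapplied to it), and that the two nested greatest lower bounds in the complete lattice $\langle P;\vee,\wedge\rangle$ may be merged into a single infimum over the pair $(y,z)$ and then compared pointwise. This is where the completeness of $P$ and the almost-everywhere conventions on $L^\infty(I)$ should be invoked explicitly rather than glossed over, since the comparisons above are ultimately understood in the a.\,e. sense.
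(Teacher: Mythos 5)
Your proposal is correct and takes essentially the same approach as the paper: both parts rest on \eqref{eq:defM} combined with Proposition~\ref{prop:A} (property \textit{ii.}, $A(x,x)=0$, for part \textit{i.}, and the triangular inequality for part \textit{ii.}). The only cosmetic difference is that the paper proves $\bar F(\bar M(\mu)) = \bar M(\mu)$ and $\bar B(\bar M(\mu)) = \bar M(\mu)$ separately and then takes their meet, whereas you expand $\bar M^2(\mu)$ as a double infimum directly; the underlying computation is identical.
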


\begin{proof}
\begin{itemize}
\item[\textit{i.}] It is a consequence of the definition of $\bar M$.
\item[\textit{ii.}] Let us now show that $\bar F(\bar M(\mu)) = \bar M(\mu)$:
the fact that $\bar F(\bar M(\mu)) \leq \bar M(\mu)$ follows by the definition of $\bar F$ whilst,
to prove the opposite inequality, note that, by Proposition~\ref{prop:A} and
\eqref{eq:defM},
\begin{align*}
\bar F(\bar M(\mu))(x) = & \bigwedge_{y \leq x} \left\{ \bar M(\mu)(y) + A(y, x) \right\} =
\bigwedge_{y \leq x} \left\{ \bigwedge_{z \in I} \left\{ \mu(z) + A(z, y) \right\} + A(y, x) \right\} \geq \\
\geq & \bigwedge_{z \in I} \left\{ \mu(z) + A(z, x) \right\} = \bar M(\mu)(x).
\end{align*}
In the same way it can be proved that $\bar B(\bar M(\mu)) = \bar M(\mu)$, from which it follows that
$\bar M(\bar M(\mu)) = \bar M(\mu)$.
\end{itemize}
\end{proof}

\begin{prpstn}
\label{prop_max_property_M}
\[
\bar M(\mu^+) = \bigvee \left\{ u \in P : u \leq \bar M(u), u \leq \mu^+ \right\}
\]
\end{prpstn}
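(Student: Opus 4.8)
The goal is to show that $\bar M(\mu^+)$ equals the supremum of the set $S = \{u \in P : u \le \bar M(u),\ u \le \mu^+\}$. The natural strategy is to prove the two-sided inequality: first that $\bar M(\mu^+)$ belongs to $S$ (hence is a lower bound for the claim that it equals $\bigvee S$ — more precisely, that it lies below the supremum), and second that $\bar M(\mu^+)$ is an upper bound for every element of $S$ (hence lies above $\bigvee S$). Since $P$ is a complete lattice, establishing both containments pins down the value exactly.

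The plan is as follows. First I would verify that $\bar M(\mu^+) \in S$. The condition $\bar M(\mu^+) \le \mu^+$ is immediate from part \textit{i.} of Proposition~\ref{prop_prop_of_M}. The condition $\bar M(\mu^+) \le \bar M(\bar M(\mu^+))$ follows from part \textit{ii.} of the same proposition, which gives the stronger equality $\bar M^2(\mu^+) = \bar M(\mu^+)$. Thus $\bar M(\mu^+)$ is itself a member of $S$, which immediately yields $\bar M(\mu^+) \le \bigvee S$.

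Next I would show that $\bar M(\mu^+)$ dominates every $u \in S$. Fix $u \in S$. From $u \le \bar M(u)$ and $u \le \mu^+$ I want to conclude $u \le \bar M(\mu^+)$. The key tool is the order-preserving property of $\bar M$ established in Proposition~\ref{prop_meetpreserving}: applying $\bar M$ to $u \le \mu^+$ gives $\bar M(u) \le \bar M(\mu^+)$. Combining this with the hypothesis $u \le \bar M(u)$ yields the chain $u \le \bar M(u) \le \bar M(\mu^+)$. Since this holds for every $u \in S$, the function $\bar M(\mu^+)$ is an upper bound of $S$, and therefore $\bigvee S \le \bar M(\mu^+)$.

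Putting the two inequalities together gives $\bar M(\mu^+) = \bigvee S$, as desired. I do not anticipate a serious obstacle here: the statement is essentially the characterization of $\bar M(\mu^+)$ as the greatest post-fixed point of $\bar M$ lying below $\mu^+$, and all the structural ingredients (idempotency, contractivity, and monotonicity of $\bar M$) have already been assembled in the preceding propositions. The only point demanding a little care is to make sure the supremum $\bigvee S$ is taken in the complete lattice $P$ and that $\bar M(\mu^+)$ genuinely lies in $P$, so that the comparison is well-posed; both facts are guaranteed by the codomain of $\bar M$ and the completeness of $\langle P; \vee, \wedge\rangle$ noted at the start of the proofs section.
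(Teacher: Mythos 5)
Your proof is correct, but it follows a genuinely different route from the paper's. The paper restricts $\bar M$ to the sublattice $U = \left\{ u \in P : u \leq \mu^+ \right\}$ (invariant under $\bar M$ by part \emph{i.} of Proposition~\ref{prop_prop_of_M}) and invokes the Knaster--Tarski Fixpoint Theorem to identify $u^* = \bigvee \left\{ u \in P : u \leq \bar M(u),\, u \leq \mu^+ \right\}$ as the \emph{greatest fixed point} of $\bar M$ on $U$; it then notes that $\bar M(\mu^+)$ is a fixed point in $U$ (so $u^* \geq \bar M(\mu^+)$) and closes with a contradiction argument, using $u^* = \bar M(u^*)$, $u^* \leq \mu^+$ and monotonicity, to get $u^* \leq \bar M(\mu^+)$. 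You instead prove the two inequalities directly: membership $\bar M(\mu^+) \in S$ (contractivity plus idempotency) gives $\bar M(\mu^+) \leq \bigvee S$, and the chain $u \leq \bar M(u) \leq \bar M(\mu^+)$ for each $u \in S$ (monotonicity applied to $u \leq \mu^+$) shows $\bar M(\mu^+)$ is an upper bound, hence $\bigvee S \leq \bar M(\mu^+)$. Your argument is more elementary and self-contained: it never needs to know anything about the supremum $\bigvee S$ itself (in particular, not that it is a fixed point), only about arbitrary elements of $S$, so Knaster--Tarski is dispensable for this identity --- the real ingredients are exactly contractivity, idempotency, and order preservation of $\bar M$. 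What the paper's route buys is the additional structural fact that $\bigvee S$ is the greatest fixed point of $\bar M$ on $U$, which fits the paper's stated theme of casting optimality as a fixpoint result; but note that this fact also drops out of your proof a posteriori, since $\bar M(\mu^+)$ is a fixed point by part \emph{ii.} of Proposition~\ref{prop_prop_of_M}.
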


\begin{proof}
Set $U = \left\{ u \in P : u \leq \mu^+ \right\}$.
Note that $\langle U; \vee, \wedge \rangle$ is a sublattice of $\langle P; \vee, \wedge \rangle$,
moreover, by~\emph{i.}~of Proposition~\ref{prop_prop_of_M}, if $u \in U$,
then $\bar M(u) \in U$.
Since $\bar M$ is order preserving by Proposition~\ref{prop_meetpreserving},
by the Knaster-Tarski Fixpoint Theorem
(Theorem~2.35 on page 50 in~\cite{davey2002introduction})
\begin{equation*}
u^* = \bigvee \left\{ u \in P: u \leq \bar M(u), u \leq \mu^+ \right\}
\end{equation*}
is such that $u^*$ is the greatest fixed point of $\bar M$ such that $u^* \in U$.
Let $u = \bar M(\mu^+)$, by part~\emph{ii.}~of Proposition~\ref{prop_prop_of_M}, we know
that $u$ is also a fixed point of $\bar M$, thus, by definition of $u^*$, $u^* \geq u$.
To prove that $u^* = u$, that is, to prove that $u$ is also the greatest fixed point,
it remains to show that $u^* \leq u$.
To this end, assume, by contradiction, that $u^* \nleq u$.
Since $u^* = \bar M(u^*)$, $u = \bar M(\mu^+)$ and the fact that $\bar M$ is order preserving,
it follows that $u^*\nleq \mu^+$, which contradicts the definition of $u^*$ $\lightning$.
\end{proof}

\begin{rmrk}
Given $u, v \in P$, if $u \nleq v$, this does not imply that $u \geq v$ and $u \neq v$,
as $u$ and $v$ may not be comparable with respect to partial order $\leq$.
\end{rmrk}

\begin{prpstn}
\label{prop_intersec}
The following two statements are equivalent:
\begin{itemize}
\item[\textit{i.}] Set $\left\{ u \in P : u = \bar M(u), \ \mu^- \leq u \leq \mu^+ \right\}$ is not empty.
\item[\textit{ii.}] $\bar M(\mu^+) \geq \mu^-$.
\end{itemize}
\end{prpstn}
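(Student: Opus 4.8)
The plan is to prove the two implications separately, relying entirely on the three structural properties of $\bar M$ already established: that $\bar M$ is order preserving (Proposition~\ref{prop_meetpreserving}), and that $\bar M(\mu) \leq \mu$ together with the idempotency $\bar M^2(\mu) = \bar M(\mu)$ (Proposition~\ref{prop_prop_of_M}). No new estimates on the hemi-metric $A$ or on the operators $\bar F, \bar B$ should be needed; the statement is essentially a formal consequence of these facts.

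For the implication \textit{ii.}~$\Rightarrow$~\textit{i.}, I would exhibit an explicit element of the set in \textit{i.}, namely $u = \bar M(\mu^+)$ itself. By the idempotency in part~\textit{ii.}~of Proposition~\ref{prop_prop_of_M}, $\bar M(u) = \bar M(\bar M(\mu^+)) = \bar M(\mu^+) = u$, so $u$ is a fixed point of $\bar M$. Part~\textit{i.}~of the same Proposition gives $u = \bar M(\mu^+) \leq \mu^+$, while the hypothesis \textit{ii.}~gives $u = \bar M(\mu^+) \geq \mu^-$. Hence $u$ lies in $\left\{ u \in P : u = \bar M(u),\ \mu^- \leq u \leq \mu^+ \right\}$, which is therefore non-empty.

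For the converse \textit{i.}~$\Rightarrow$~\textit{ii.}, I would take any $u$ in the set from \textit{i.}, so that $u = \bar M(u)$ and $\mu^- \leq u \leq \mu^+$, and push $u$ through $\bar M$. Since $u \leq \mu^+$ and $\bar M$ is order preserving, $u = \bar M(u) \leq \bar M(\mu^+)$; combining this with $\mu^- \leq u$ yields $\mu^- \leq \bar M(\mu^+)$, which is exactly \textit{ii.}. Equivalently, one may observe that such a $u$ satisfies $u \leq \bar M(u)$ and $u \leq \mu^+$, so by Proposition~\ref{prop_max_property_M} it is dominated by the supremum $\bar M(\mu^+)$, and the same conclusion follows.

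I expect no serious obstacle here: the whole argument is a short order-theoretic manipulation once idempotency and monotonicity of $\bar M$ are in hand. The only point requiring a little care is to invoke the correct property in each direction---idempotency to manufacture a fixed point in \textit{ii.}~$\Rightarrow$~\textit{i.}, and order preservation together with the fixed-point hypothesis in \textit{i.}~$\Rightarrow$~\textit{ii.}---and to keep the two inequalities $\mu^- \leq u$ and $u \leq \mu^+$ separate throughout, since $P$ is only partially ordered and comparability of arbitrary elements cannot be assumed, as noted in the preceding Remark.
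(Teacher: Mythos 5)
Your proposal is correct and follows essentially the same route as the paper: for \textit{ii.}~$\Rightarrow$~\textit{i.} both exhibit $\bar M(\mu^+)$ as the required fixed point via idempotency and $\bar M(\mu^+)\leq\mu^+$, and for \textit{i.}~$\Rightarrow$~\textit{ii.} both rest on the chain $\mu^-\leq u=\bar M(u)\leq\bar M(\mu^+)$ obtained from order preservation (equivalently, Proposition~\ref{prop_max_property_M}). The only difference is stylistic: the paper phrases the second implication as a proof by contradiction, whereas you argue it directly, which is if anything slightly cleaner.
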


\begin{proof}
\begin{itemize}
\item[]
\item[$\left.\textit{ii.}\Rightarrow\textit{i.}\right)$]
It follows from the fact that $u^* = \bar M(\mu^+)$ is such that
$\bar M(u^*)=u^*$ by part~\emph{ii.} of Proposition~\ref{prop_prop_of_M}.

\item[$\left.\textit{i.}\Rightarrow\textit{ii.}\right)$]
By contradiction, assume that $\bar M(\mu^+) \ngeq \mu^-$.
Choose any $w \in P$ such that $w = \bar M(w)$ and $w \leq \mu^+$.
By Proposition~\ref{prop_max_property_M},
$\bar M(w) \leq \bar M(\mu^+)$, hence $w \leq \bar M(\mu^+)$,
but $\bar M(\mu^+) \ngeq \mu^-$, so $w \ngeq \mu^-$.
Thus, being $w$ any fixed point of $\bar M$ such that $w \leq \mu^+$,
set $\left\{u \in P: u = \bar M(u), \ \mu^- \leq u \leq \mu^+ \right\}$ is empty $\lightning$.
\end{itemize}
\end{proof}

\begin{prpstn}
\label{prop_diff_eq}
If $\mu \in \mathcal{Q}$,
then $\bar F(\mu), \bar B(\mu) \in W^{1,\infty}(I)$ satisfy a. e.
\begin{equation}\label{prop:F_diff_eq}
\begin{cases}
\bar F(\mu)^\prime(x) =
\begin{cases}
\alpha^+(x) \wedge \mu^\prime (x), & \mbox{if } \ \bar F(\mu)(x) \geq \mu(x) \\
\alpha^+(x), & \mbox{if } \ \bar F(\mu)(x) < \mu(x)
\end{cases}\\
\bar F(\mu)(0) = \mu(0),
\end{cases}
\end{equation}
and
\begin{equation}\label{prop:B_diff_eq}
\begin{cases}
\bar B(\mu)^\prime(x) =
\begin{cases}
\alpha^-(x) \vee \mu^\prime (x), & \mbox{if } \ \bar B(\mu)(x) \geq \mu(x) \\
\alpha^-(x), & \mbox{if } \ \bar B(\mu)(x) < \mu(x)
\end{cases}\\
\bar B(\mu)(s_f) = \mu(s_f).
\end{cases}
\end{equation}
\end{prpstn}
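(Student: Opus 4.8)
The plan is to reduce both operators to running minima of an auxiliary function and then read off the differential relation from the monotonicity of that running minimum. I will detail the argument for $\bar F$; the one for $\bar B$ is symmetric, with the roles of $\alpha^+$, the infimum $\inf_{y\le x}$ and ``non-increasing'' replaced by $\alpha^-$, $\inf_{y\ge x}$ and ``non-decreasing''.

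First, since for $y \le x$ Proposition~\ref{prop:A} (the equality case of the triangular inequality) gives $A(y,x)=\int_y^x \alpha^+(\xi)\,d\xi$, I can write, setting $h(y)=\mu(y)-\int_0^y\alpha^+(\xi)\,d\xi$,
\[
\bar F(\mu)(x)=\inf_{y\le x}\Big\{\mu(y)+\int_y^x\alpha^+(\xi)\,d\xi\Big\}=\int_0^x\alpha^+(\xi)\,d\xi+\Phi(x),\qquad \Phi(x):=\inf_{y\le x}h(y).
\]
Thus $\bar F(\mu)$ differs from the running minimum $\Phi$ of $h$ by the absolutely continuous term $\int_0^x\alpha^+$. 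Because $\mu\in W^{1,\infty}(I)$ and $\alpha^+\in L^\infty(I)$, the functions $x\mapsto \mu(y)+\int_y^x\alpha^+$ are uniformly Lipschitz, so their pointwise infimum $\bar F(\mu)$ is Lipschitz, hence $\bar F(\mu)\in W^{1,\infty}(I)$; the boundary value $\bar F(\mu)(0)=\mu(0)$ is immediate. Since $\Phi$ is non-increasing, $\Phi'\le 0$ a.\,e., whence $\bar F(\mu)'=\alpha^++\Phi'\le \alpha^+$ a.\,e. Moreover, taking $y=x$ in the infimum shows $\bar F(\mu)\le \mu$, so the two cases of~\eqref{prop:F_diff_eq} reduce to ``$\bar F(\mu)=\mu$'' and ``$\bar F(\mu)<\mu$''.

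Next I will compute $\bar F(\mu)'$ on each region. On the open set $\{\bar F(\mu)<\mu\}$ one has $h(x)>\Phi(x)$, so by continuity of $h$ the running minimum is locally constant and $\Phi'=0$, giving $\bar F(\mu)'=\alpha^+$, as required in the second branch. On the contact set $E=\{\bar F(\mu)=\mu\}$, I will use that $\bar F(\mu)-\mu\le 0$ vanishes exactly on $E$; at almost every point of $E$ — namely at its density points, which form a full-measure subset by the Lebesgue density theorem — the point $x_0$ is a two-sided accumulation point of $E$, so evaluating the difference quotient of $\bar F(\mu)-\mu$ along a sequence in $E$ yields $\bar F(\mu)'(x_0)=\mu'(x_0)$ wherever both derivatives exist. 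Combining this with the a.\,e.\ bound $\bar F(\mu)'\le\alpha^+$ proved above forces $\mu'\le\alpha^+$ a.\,e.\ on $E$, hence $\alpha^+\wedge\mu'=\mu'=\bar F(\mu)'$ there, which is precisely the first branch of~\eqref{prop:F_diff_eq}.

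The main obstacle is this last step: justifying that $\bar F(\mu)'=\mu'$ holds a.\,e.\ on the contact set $E$. Away from density points the identity may fail, so the argument rests on the measure-theoretic fact that a.\,e.\ point of $E$ is a density point, together with the elementary lemma that two functions agreeing on a set share their derivative at two-sided accumulation points of that set. This is also the only place where regularity beyond continuity matters: the assumption $\mu\in\mathcal Q$ — which makes $\sgn(\mu'-\alpha^+)$ and $\sgn(\mu'-\alpha^-)$ a.\,e.\ continuous — guarantees both that the decomposition of $I$ into the sets where the two branches apply is well behaved and that the discontinuity set of the right-hand side of~\eqref{prop:F_diff_eq} has zero measure, so that $\bar F(\mu)$ is genuinely the unique solution identified with $F(\mu)$. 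The verification for $\bar B$ proceeds identically, replacing $h$ by $\mu(y)-\int_0^y\alpha^-$, whose right-running minimum $\inf_{y\ge x}$ is non-decreasing; this yields $\bar B(\mu)'\ge\alpha^-$ a.\,e., $\bar B(\mu)'=\alpha^-$ off the contact set, and $\alpha^-\vee\mu'=\mu'$ on it, matching~\eqref{prop:B_diff_eq}.
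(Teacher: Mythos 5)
Your proof is correct, but it takes a genuinely different route from the paper's. The paper argues pointwise: at each $x$ in the full-measure set $J$ where $\sgn(\mu'-\alpha^+)$ is continuous --- this is exactly where $\mu\in\mathcal{Q}$ is used --- it computes the right difference quotient of $\bar F(\mu)$, splitting the infimum over $y\le x+h$ into the range $y\le x$, which collapses to $\bar F(\mu)(x)+A(x,x+h)$ by the triangle equality of Proposition~\ref{prop:A}, and the range $x<y\le x+h$, where local constancy of the sign determines whether the minimum is attained at $y=x+h$ or at $y=x$; the resulting limit cases then assemble into the two branches of \eqref{prop:F_diff_eq}. You instead decompose $\bar F(\mu)(x)=\int_0^x\alpha^+(\xi)\,d\xi+\Phi(x)$ with $\Phi$ a running minimum, and extract everything from monotonicity: the a.e.\ bound $\bar F(\mu)'\le\alpha^+$, local constancy of $\Phi$ off the contact set, and the Lebesgue-density lemma on the contact set. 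Both arguments are complete, and the key steps you flag (density points of the contact set, derivative of a difference vanishing on a set) are standard and sound. The trade-off: your proof nowhere uses $\mu\in\mathcal{Q}$, so it establishes the a.e.\ differential relations (and the $W^{1,\infty}$ membership, which you justify more explicitly than the paper does) for arbitrary $\mu\in W^{1,\infty}(I)$, a strictly more general statement; the hypothesis $\mathcal{Q}$ is only needed afterwards, to guarantee uniqueness of solutions of the discontinuous ODE \eqref{eq:forward_iteration} and hence the identification of $\bar F(\mu)$ with $F(\mu)$ claimed in the text preceding the proposition. Your closing remark is slightly off on this point: within your own argument the density-point step requires nothing beyond Lipschitz regularity, so $\mathcal{Q}$ plays no role there at all, rather than being ``the only place where regularity matters.'' The paper's computation, in exchange, yields existence of the one-sided derivative at every point of $J$ rather than merely a.e., and its case analysis mirrors the definition of $F$ in \eqref{eq:forward_iteration}, which makes that identification transparent.
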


\begin{proof}
Let $J = \{ x \in I : \sgn(\mu^\prime - \alpha^+) \mbox{ is continuous at } x\}$.
Note that, since $\mu \in \mathcal{Q}$, $J$ contains almost all elements of $I$.
Let $x \in J$, then
\begin{gather}
\nonumber
\lim_{h \rightarrow 0^+} \frac{\bar F(\mu)(x + h) - \bar F(\mu)(x)}{h}
= \lim_{h \rightarrow 0^+} h^{-1}
\!\!\left[ \underset{y \leq x + h}{\bigwedge} \{ \mu(y) + A(y, x + h) \} - \bar F(\mu)(x) \right] = \\
\label{eq:limit}
= \lim_{h \rightarrow 0^+} h^{-1} \left[ \left( \underset{y \leq x}{\bigwedge}
\{ \mu(y) + A(y, x + h) \} - \bar F(\mu)(x) \right) \right. \wedge
\left. \left( \underset{x < y \leq x + h}{\bigwedge}
\{ \mu(y) + A(y, x + h) \} - \bar F(\mu)(x) \right) \right]
\end{gather}
Since $A(y, x+h) = A(y, x) + A(x, x+h)$ by Proposition~\ref{prop:A},
the first parenthesis of~\eqref{eq:limit} reduces to
\begin{equation*}\label{propr:par1}
\underset{y \leq x}{\bigwedge} \{ \mu(y) + A(y, x + h) \} - \bar F(\mu)(x) =
\bar F(\mu)(x) + A(x, x + h) - \bar F(\mu)(x) = A(x, x + h).
\end{equation*}
Being $\sgn(\mu^\prime - \alpha^+)$ continuous at $x$, it is possible to choose $h > 0$
sufficiently small such that $\sgn(\mu^\prime - \alpha^+)$ is constant on interval
$[x, x+h]$.
Set $J^+ = \left\{x \in J : \mu^\prime(x) - \alpha^+(x) > 0 \right\}$ and
$J^- = J \setminus J^+$.
Then, the second parenthesis of~\eqref{eq:limit} can be rewritten as
\begin{equation*}
\underset{x < y \leq x + h}{\bigwedge} \left\{ \mu(y) + A(y, x + h) \right\} - \bar F(\mu)(x) =
\begin{cases}
\mu(x + h) - \bar F(\mu)(x), & \mbox{if } \ x \in J^- \\
\mu(x) + A(x, x+h) - \bar F(\mu)(x), & \mbox{if } \ x \in J^+,
\end{cases}
\end{equation*}
since in the former case the minimum of $\mu(y) + A(y, x + h)$ over
$[x, x+h]$ is attained at $x+h$, whilst in the latter is attained at $x$.

Hence, we have that
\begin{align*}
\lim_{h \rightarrow 0^+} \frac{\bar F(\mu)(x + h) - \bar F(\mu)(x)}{h}
= & \begin{cases}
\lim\limits_{h \rightarrow 0^+} h^{-1}\left[ A(x, x + h) \wedge
\left( \mu(x + h) - \bar F(\mu)(x) \right) \right], & \mbox{if } \ x \in J^- \\
\lim\limits_{h \rightarrow 0^+} h^{-1}\left[ A(x, x + h) \wedge
\left( \mu(x) + A(x, x+h) - \bar F(\mu)(x) \right) \right], & \mbox{if } \ x \in J^+
\end{cases}\\
= & \begin{cases}
\alpha^+(x) \wedge \lim\limits_{h \rightarrow 0^+} h^{-1} \left( \mu(x + h) - \bar F(\mu)(x) \right),
& \mbox{if } \ x \in J^- \\
\alpha^+(x) \wedge \lim\limits_{h \rightarrow 0^+} h^{-1} \left( \mu(x) + A(x, x+h) - \bar F(\mu)(x) \right),
& \mbox{if } \ x \in J^+
\end{cases}\\
= & \begin{cases}
\alpha^+(x) \wedge \mu^\prime(x),
& \mbox{if } \ x \in J^- \ \mbox{ and } \ \bar F(\mu)(x) \geq \mu(x) \\
\alpha^+(x) \wedge +\infty = \alpha^+(x),
& \mbox{if } \ x \in J^- \ \mbox{ and } \ \bar F(\mu)(x) < \mu(x) \\
\alpha^+(x) = \alpha^+(x) \wedge \mu^\prime(x),
& \mbox{if } \ x \in J^+ \ \mbox{ and } \ \bar F(\mu)(x) \geq \mu(x) \\
\alpha^+(x) \wedge +\infty = \alpha^+(x),
& \mbox{if } \ x \in J^+ \ \mbox{ and } \ \bar F(\mu)(x) < \mu(x)
\end{cases}\\
= &
\begin{cases}
\alpha^+(x) \wedge \mu^\prime(x), & \mbox{if } \bar F(\mu)(x) \geq \mu(x) \\
\alpha^+(x), & \mbox{if } \bar F(\mu)(x) < \mu(x).
\end{cases}
\end{align*}
Note that, by definition of $\bar F$, $\bar F(\mu)(x) \leq \mu(x)$ must hold.
In conclusion, we proved that $\bar F(u) \in W^{1,\infty}(I)$
and satisfies~\eqref{prop:F_diff_eq}. \\
Applying the same reasoning it can be proved that $\bar B(u) \in W^{1,\infty}(I)$
and satisfies~\eqref{prop:B_diff_eq}.
\end{proof}

\begin{prpstn}
\label{prop_feas}
Assume that $\mu^+ \in \mathcal{Q}$ and let $w \in P$,
then $w$ is feasible for Problem~\eqref{eqn_problem_pr_wg}
(i.e., it satisfies constraints \eqref{con_speed_pr_wg} and \eqref{con_at_pr_wg}),
if and only if $\mu^- \leq w \leq \mu^+$ and $\bar M(w) = w$.
\end{prpstn}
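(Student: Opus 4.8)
The plan is to split the claimed characterization into its two independent pieces: the box constraint~\eqref{con_speed_pr_wg}, which matches the requirement $\mu^- \leq w \leq \mu^+$ directly, and the derivative constraint~\eqref{con_at_pr_wg}, which I would show is exactly the fixed-point equation $\bar M(w) = w$. The heart of the proof is this second equivalence. First I would record that $\bar M(w) \leq w$ holds for every $w \in P$ (part~\textit{i.} of Proposition~\ref{prop_prop_of_M}, or directly by taking $y = x$ in~\eqref{eq:defM}), so that $\bar M(w) = w$ is equivalent to $\bar M(w) \geq w$, i.e., to the family of scalar inequalities $w(x) \leq w(y) + A(y, x)$ for all $x, y \in I$. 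Since $\bar F(w)(x)$ and $\bar B(w)(x)$ are each bounded above by $w(x)$ (again the $y = x$ term), the equation $\bar M(w) = w$ is further equivalent to the conjunction $\bar F(w) = w$ and $\bar B(w) = w$, which isolate respectively the pairs with $y \leq x$ and with $y \geq x$.

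Next I would substitute the explicit form of $A$ from~\eqref{eq:A}. For $y \leq x$ one has $A(y, x) = \int_y^x \alpha^+(\xi)\, d\xi$, so $\bar F(w) = w$ reads $w(x) - w(y) \leq \int_y^x \alpha^+$ for all $y \leq x$; for $y \geq x$ one has $A(y, x) = -\int_x^y \alpha^-(\xi)\, d\xi$, so $\bar B(w) = w$ reads $w(y) - w(x) \geq \int_x^y \alpha^-$ for all $x \leq y$. The key step is then to prove that these integral inequalities, quantified over all admissible pairs, are equivalent to the a.e.\ pointwise bounds $\alpha^-(x) \leq w'(x) \leq \alpha^+(x)$ of~\eqref{con_at_pr_wg}. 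The forward implication is immediate by integrating the derivative bounds over $[y, x]$ and using the absolute continuity of $w \in W^{1,\infty}(I)$. For the converse I would use a Lebesgue-point argument applied to $h := \alpha^+ - w' \in L^\infty(I)$: if $h < 0$ on a set of positive measure, then at a Lebesgue point $x_0$ with $h(x_0) < 0$ the average $\frac{1}{\delta}\int_{x_0}^{x_0 + \delta} h$ would be negative for small $\delta$, contradicting $\int_y^x h \geq 0$; the symmetric argument handles the lower bound.

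A regularity point must be settled in the $\Leftarrow$ direction, since feasibility requires $w \in W^{1,\infty}(I)$ whereas $w$ is only assumed to lie in $P \subseteq L^\infty(I)$. Here I would note that $A(y, \cdot)$ is Lipschitz in its second argument with constant $\max\{\|\alpha^+\|_\infty, \|\alpha^-\|_\infty\}$ uniformly in $y$, so that $\bar M(w) = \bigwedge_{y \in I}\{ w(y) + A(y, \cdot) \}$, being an infimum of a uniformly Lipschitz family, is itself Lipschitz. Consequently $\bar M(w) = w$ forces $w$ to coincide a.e.\ with a Lipschitz function, hence $w \in W^{1,\infty}(I)$, after which the derivative characterization above applies and $w$ satisfies~\eqref{con_at_pr_wg}; together with $\mu^- \leq w \leq \mu^+$ this gives feasibility. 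The $\Rightarrow$ direction is then the reverse reading of the same chain.

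I expect the measure-theoretic converse inside the derivative equivalence, together with the regularity bootstrapping (deducing $w \in W^{1,\infty}$ purely from the Lipschitz structure of $\bar M$), to be the only genuinely delicate points; once the explicit formula for $A$ is inserted and the splitting $\bar M = \bar F \wedge \bar B$ is exploited, the remainder is bookkeeping. The one place to state carefully is the handling of the lower bound of~\eqref{con_speed_pr_wg} relative to the non-strict inequality $\mu^- \leq w$ appearing in the proposition.
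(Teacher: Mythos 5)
Your proposal is correct, but it establishes the key equivalence by a genuinely different route than the paper. The paper's proof goes through the differential-equation characterization: for the forward implication it notes that a feasible $w$ satisfies $w' \le \alpha^+$ a.e., hence $\phi = w$ solves~\eqref{eq:forward_iteration} with $\mu = w$, and concludes $\bar F(w) = w$ by uniqueness of that solution; for the converse it extracts $w' \le \alpha^+$ a.e.\ from $\bar F(w) = w$ by invoking the a.e.\ ODE satisfied by $\bar F(w)$ (Proposition~\ref{prop_diff_eq}), and argues symmetrically for $\bar B$. You bypass the ODE entirely: after the shared reduction of $\bar M(w) = w$ to the pair $\bar F(w) = w$, $\bar B(w) = w$ (via $\bar F(w), \bar B(w) \le w$), you insert the explicit formula~\eqref{eq:A} for $A$, rewrite the two fixed-point equations as the integral inequalities $w(x) - w(y) \le \int_y^x \alpha^+(\xi)\, d\xi$ for $y \le x$ and $w(y) - w(x) \ge \int_x^y \alpha^-(\xi)\, d\xi$ for $x \le y$, and show these equivalent to the a.e.\ bounds of~\eqref{con_at_pr_wg} by integration in one direction and a Lebesgue-point argument in the other. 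Your route buys rigor at two points where the paper is loose. First, Proposition~\ref{prop_diff_eq} is proved only for $\mu \in \mathcal{Q}$, yet the paper applies the ODE characterization with $\mu = w$ for an arbitrary feasible (respectively, arbitrary fixed-point) function $w$, which need not belong to $\mathcal{Q}$; your argument needs no such membership. Second, in the backward direction feasibility demands $w \in W^{1,\infty}(I)$ while only $w \in P$ is assumed, and your bootstrapping step (the functions $w(y) + A(y,\cdot)$ are uniformly Lipschitz, so their infimum $\bar M(w)$ is Lipschitz, forcing $w = \bar M(w)$ to be Lipschitz) supplies exactly the regularity the paper's proof silently skips. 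The price is length: you re-derive by hand what the paper obtains by citing Proposition~\ref{prop_diff_eq} and ODE uniqueness. Finally, the discrepancy you flag between the strict inequality $\mu^- < w$ in~\eqref{con_speed_pr_wg} and the non-strict $\mu^- \le w$ in the statement is real, but it is a defect of the proposition itself: the paper's own backward direction merely asserts that~\eqref{con_speed_pr_wg} ``holds by hypothesis'', so neither proof resolves it and yours is no worse off for flagging rather than fixing it.
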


\begin{proof}
\begin{itemize}
\item[$\left.\Rightarrow\right)$]
Assume that $w$ is feasible for
 Problem~\eqref{eqn_problem_pr_wg}, then $w$ satisfies a. e.
$w^\prime \leq \alpha^+$. Thus, $\phi = w$ is the solution
of~\eqref{eq:forward_iteration} for $\mu = w$, which implies that
$\bar F(w)=w$.
Analogously $\bar B(w)=w$, so that $\bar M(w)=w$.
Moreover, since $w$ satisfies the bounds of Problem~\eqref{eqn_problem_pr_wg},
it follows that $\mu^- \leq w \leq \mu^+$.

\item[$\left.\Leftarrow\right)$]
Condition~\eqref{con_speed_pr_wg} holds by hypothesis.
Since $\bar M(w) = w$, then it must be $\bar F(w) = w$. In fact, if
by contradiction $\bar F(w) < w$, $\bar M(w) \leq \bar F(w) < w$, which contradicts
the assumption $\lightning$.
Then $\bar F(w) = w$, implies that, a. e., $\bar F(w)' = w'$ which
by definition of $\bar F(w)'$ in~\eqref{eq:forward_iteration}, implies that, a. e., $w'
\leq \alpha^+$.
Analogously, it must be $\bar B(w) = w$ which implies that, a. e. $w' \geq
\alpha^-$ and condition~\eqref{con_at_pr_wg} holds.
\end{itemize}
\end{proof}

\begin{prpstn}
\label{prop_feasibility}
Assume that $\mu^+ \in \mathcal{Q}$.
Then, Problem~\eqref{eqn_problem_pr_wg} is feasible if and only if $\bar M(\mu^+)\geq \mu^-$.
\end{prpstn}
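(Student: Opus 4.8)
The plan is to read this proposition as a direct corollary of the two preceding results: Proposition~\ref{prop_feas}, which characterizes each individual feasible point of Problem~\eqref{eqn_problem_pr_wg} as exactly a fixed point of $\bar M$ lying between $\mu^-$ and $\mu^+$, and Proposition~\ref{prop_intersec}, which translates the existence of such a fixed point into the single inequality $\bar M(\mu^+) \geq \mu^-$. Since ``Problem~\eqref{eqn_problem_pr_wg} is feasible'' means precisely that its feasible set is non-empty, the whole statement should follow by chaining these two equivalences; there is essentially no new analysis to do here, only the bookkeeping needed to connect the two viewpoints.

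For the forward implication I would start from an arbitrary feasible $w$. First I would check that $w \in P$: the constraint $\mu^- < w \leq \mu^+$ together with $\mu^- \geq 0$ gives $0 \leq w$, and $w \leq \mu^+ \leq \|\mu^+\|_\infty$ a.e.\ gives the upper bound, so $w$ is a legitimate element of the lattice $P$. Then Proposition~\ref{prop_feas} (the $\Rightarrow$ direction) yields $\bar M(w)=w$ and $\mu^- \leq w \leq \mu^+$, so $w$ witnesses that the set in part~\textit{i.}~of Proposition~\ref{prop_intersec} is non-empty. Invoking the implication \textit{i.}$\Rightarrow$\textit{ii.}~of that proposition gives $\bar M(\mu^+) \geq \mu^-$, as required.

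For the converse I would assume $\bar M(\mu^+)\geq\mu^-$ and exhibit an explicit feasible competitor. The natural candidate is $u^* = \bar M(\mu^+)$ itself: by part~\textit{ii.}~of Proposition~\ref{prop_prop_of_M} it is a fixed point, $\bar M(u^*)=u^*$, and by part~\textit{i.}~it satisfies $u^* \leq \mu^+$; combined with the hypothesis $u^* \geq \mu^-$, this places $u^*$ in the set of Proposition~\ref{prop_intersec}~\textit{i.} Because $\mu^+ \in \mathcal{Q}$, Proposition~\ref{prop_diff_eq} guarantees $\bar F(\mu^+),\bar B(\mu^+) \in W^{1,\infty}(I)$ and hence $u^* \in W^{1,\infty}(I)$, so $u^*$ is an admissible argument of the problem. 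Applying the $\Leftarrow$ direction of Proposition~\ref{prop_feas} shows $u^*$ is feasible, so Problem~\eqref{eqn_problem_pr_wg} is feasible.

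I expect the only delicate point to be the regularity/membership bookkeeping rather than any substantive estimate: one must make sure the fixed-point witness produced by the lattice machinery is genuinely an admissible competitor of the original problem (membership in $W^{1,\infty}(I)$ via Proposition~\ref{prop_diff_eq}, and membership in $P$ via $\mu^- \geq 0$), and one must be a little careful about the strict lower bound $\mu^- < w$ in~\eqref{con_speed_pr_wg} versus the non-strict $\mu^- \leq w$ appearing in Proposition~\ref{prop_feas}. Apart from reconciling this strict/non-strict boundary convention, the argument is a purely formal concatenation of the two previous propositions, with no remaining analytic obstacle.
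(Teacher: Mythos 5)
Your proposal is correct and follows essentially the same route as the paper: both directions hinge on Proposition~\ref{prop_feas}, and your backward direction (taking $u^* = \bar M(\mu^+)$, which is a fixed point below $\mu^+$ by Proposition~\ref{prop_prop_of_M} and above $\mu^-$ by hypothesis) is exactly the paper's argument. The only cosmetic difference is in the forward direction, where the paper inlines a one-line order-preservation step ($\bar M(\mu^+) \geq \bar M(w) = w \geq \mu^-$) rather than citing Proposition~\ref{prop_intersec}, whose proof contains that very argument; your extra bookkeeping (checking $w \in P$, the $W^{1,\infty}$ regularity of $\bar M(\mu^+)$ via Proposition~\ref{prop_diff_eq}, and flagging the strict bound in~\eqref{con_speed_pr_wg} versus the non-strict bound in Proposition~\ref{prop_feas}) addresses glosses that are present in the paper's own proofs.
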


\begin{proof}
\begin{itemize}
\item[$\left.\Rightarrow\right)$]
Let $w$ be a feasible solution of
Problem~\eqref{eqn_problem_pr_wg}. Then, by
Proposition~\ref{prop_feas}, $\bar M(w)=w \leq \mu^+$.
Hence, being $\bar M$ order preserving,  $\bar M(\mu^+) \geq \bar M(w) \geq \mu^-$.

\item[$\left.\Leftarrow\right)$]
$w=\bar M(\mu^+)$ satisfies $\bar M(w) = w$ (by part \textit{ii.} of
Proposition~\ref{prop_prop_of_M}) and $\mu^+ \leq w \leq \mu^-$ (by
hypothesis). Hence, by Proposition~\ref{prop_feas}, $w$ is a feasible
solution of  Problem~\eqref{eqn_problem_pr_wg}.
\end{itemize}
\end{proof}

\begin{prpstn}
\label{prop_solution}
If $\mu^+ \in \mathcal{Q}$ and Problem~\eqref{eqn_problem_pr_wg}
is feasible, then $w^*=\bar M(\mu^+)$ is its optimal solution.
\end{prpstn}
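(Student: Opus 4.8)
The plan is to show that $w^* = \bar M(\mu^+)$ is itself feasible and that it dominates every feasible solution; optimality then follows at once from the order-reversing property of $\Psi$. Concretely, since $\Psi$ satisfies $x \geq y \Rightarrow \Psi(x) \leq \Psi(y)$, it suffices to prove that $w \leq w^*$ for every feasible $w$, together with the admissibility of $w^*$.

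First I would record that $w^*$ is feasible. Because Problem~\eqref{eqn_problem_pr_wg} is assumed feasible, Proposition~\ref{prop_feasibility} gives $\bar M(\mu^+) \geq \mu^-$; moreover, by part \textit{ii.} of Proposition~\ref{prop_prop_of_M} we have $\bar M(w^*) = \bar M(\bar M(\mu^+)) = \bar M(\mu^+) = w^*$, and by part \textit{i.} of the same proposition $w^* = \bar M(\mu^+) \leq \mu^+$. Thus $\mu^- \leq w^* \leq \mu^+$ and $\bar M(w^*) = w^*$, so Proposition~\ref{prop_feas} yields that $w^*$ is an admissible candidate. (This is exactly the content of the ``$\Leftarrow$'' direction already carried out inside the proof of Proposition~\ref{prop_feasibility}.)

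The core step is to show $w \leq w^*$ for an arbitrary feasible $w$. Since $\mu^- \geq 0$ and $\mu^- < w \leq \mu^+ \leq \left\| \mu^+ \right\|_\infty$ almost everywhere, we have $w \in P$. By the ``$\Rightarrow$'' direction of Proposition~\ref{prop_feas}, feasibility gives $\bar M(w) = w$ and $w \leq \mu^+$; in particular $w \leq \bar M(w)$ and $w \leq \mu^+$, so $w$ lies in the set $\left\{ u \in P : u \leq \bar M(u),\ u \leq \mu^+ \right\}$. By Proposition~\ref{prop_max_property_M} this set has supremum $\bar M(\mu^+) = w^*$, whence $w \leq w^*$. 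Combining the two steps, for every feasible $w$ we obtain $w \leq w^*$, and since $\Psi$ is order reversing, $\Psi(w^*) \leq \Psi(w)$; as $w^*$ is itself feasible, it is an optimal solution.

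The entire substance is carried by Propositions~\ref{prop_feas} and~\ref{prop_max_property_M}, so I expect no genuine obstacle here — the hard work (the Knaster–Tarski fixpoint characterization of $\bar M(\mu^+)$ as the supremal element and the identification of $\bar M$ with the ODE operators) has already been done, and this proposition is the short harvest that converts the supremal-fixed-point description into optimality via monotonicity of $\Psi$. The only point requiring a moment of care is bookkeeping: checking that a feasible $w$ indeed lands in $P$ and in the fixed-point set over which the supremum is taken, and reconciling the strict lower bound $\mu^- < w$ appearing in constraint~\eqref{con_speed_pr_wg} with the non-strict characterization $\mu^- \leq w$ used in Proposition~\ref{prop_feas}.
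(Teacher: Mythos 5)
Your proof is correct and follows essentially the same route as the paper's: both rest on Proposition~\ref{prop_feas} (feasible solutions are fixed points of $\bar M$ dominated by $\mu^+$) and the supremal characterization of $\bar M(\mu^+)$ from Proposition~\ref{prop_max_property_M}, combined with the order-reversing property of $\Psi$. The differences are purely presentational: the paper argues by contradiction where you argue directly, and you make explicit the feasibility of $w^*$ and the membership checks in $P$, which the paper leaves implicit (via Proposition~\ref{prop_feasibility}).
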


\begin{proof}
By contradiction, assume that there exists a feasible $\tilde w$ such that
$\Psi(\tilde w) < \Psi(w^*)$. Since $\tilde w$ is feasible,
Proposition~\ref{prop_feas} implies that $\bar M(\tilde w)=\tilde w$.
Moreover, since $\Psi$ is order reversing, $\tilde w \nleq w^*$.
This is not possible since $w^*=\bigvee \{w \in P: w \leq \bar M(w), w \leq
\mu^+\}  \geq \tilde w$, by Proposition~\ref{prop_max_property_M} $\lightning$.
\end{proof}

\bibliographystyle{abbrv}
\bibliography{Automatica_PR,VelPlan}

\end{document}